


\documentclass[11pt]{amsart}
\usepackage{amssymb}
\usepackage{amsmath}
\usepackage{amsfonts}
\usepackage{graphicx}

\usepackage{hyperref}
    \usepackage{aeguill}
    \usepackage{type1cm}

\theoremstyle{plain}

\newtheorem{claim}{Claim}

\newtheorem{corollary}{Corollary}

\newtheorem{definition}{Definition}
\newtheorem{example}{Example}

\newtheorem{lemma}{Lemma}

\newtheorem{proposition}{Proposition}
\newtheorem{remark}{Remark}

\newtheorem{theorem}{Theorem}
\newcommand{\N}{\mathbb{N}}

\newcommand{\R}{\mathbb{R}}

\numberwithin{equation}{section}


\begin{document}

\title[Global approximation of convex functions]{Global approximation of convex functions}
\author{Daniel Azagra}
\address{ICMAT (CSIC-UAM-UC3-UCM), Departamento de An{\'a}lisis Matem{\'a}tico,
Facultad Ciencias Matem{\'a}ticas, Universidad Complutense, 28040, Madrid, Spain}
\email{daniel\_azagra@mat.ucm.es}

\date{December 2, 2011}

\keywords{Uniform approximation, convex function, smooth function,
real analytic function, Banach space, Riemannian manifold}

\dedicatory{Dedicated to the memory of Robb Fry}

\subjclass[2010]{26B25, 41A30, 52A1, 46B20, 49N99, 58E99}

\begin{abstract}
Let $U\subseteq\R^n$ be open and convex. We show that every
(not necessarily Lipschitz or strongly) convex function $f:U\to\R$ can be approximated by real
analytic convex functions, uniformly on all of $U$. In doing so we
provide a technique which transfers results on uniform
approximation on bounded sets to results on uniform approximation
on unbounded sets, in such a way that not only convexity and $C^k$
smoothness, but also local Lipschitz constants, minimizers, order,
and strict or strong convexity, are preserved. This transfer
method is quite general and it can also be used to obtain new
results on approximation of convex functions defined on Riemannian
manifolds or Banach spaces. We also provide a characterization of
the class of convex functions which can be uniformly approximated
on $\R^n$ by strongly convex functions. Finally, we give some
counterexamples showing that $C^0$-fine approximation of convex
functions by smooth convex functions is not possible on $\mathbb{R}^{n}$
whenever $n\geq 2$.
\end{abstract}

\maketitle

\section{Introduction and main results}

Two important classes of functions in analysis are those of
Lipschitz functions and convex functions $f:U\subseteq \R^n\to\R$.
Although these functions are almost everywhere differentiable (or
even a.e. twice differentiable in the convex case), it is sometimes useful
to approximate them by smooth functions which are
Lipschitz or convex as well.

In the case of a Lipschitz function $f:U\subseteq\R^n\to\R$, this can easily
be done as follows: by considering the function
$x\mapsto \inf_{y\in U}\{f(y)+L|x-y|\}$ (where
$L=\textrm{Lip}(f)$, the Lipschitz constant of $f$), which is a
Lipschitz extension of $f$ to all of $\R^n$ having the same
Lipschitz constant, one can assume $U=\R^n$. Then, by setting
$f_{\varepsilon}=f*H_{\varepsilon}$, where
$H_{\varepsilon}(x)=\frac{1}{(4\pi
    \varepsilon)^{n/2}}\exp(-|x|^2/4\varepsilon)$ is the heat kernel,
    one obtains real analytic
Lipschitz functions (with the same Lipschitz constants as $f$)
which converge to $f$ uniformly on all of $\R^n$ as
$\varepsilon\searrow 0$. If one replaces $H_\varepsilon$ with any
approximate identity $\{\delta_\varepsilon\}_{\varepsilon>0}$ of
class $C^k$, one obtains $C^k$ Lipschitz approximations. Moreover,
if $\delta_{\varepsilon}\geq 0$ and $f$ is convex, then
the functions $f_\varepsilon$ are convex as well.

However, if $f:\R^n\to\R$ is convex but not globally Lipschitz, the
convolutions $f*H_{\varepsilon}$ may not be well defined or, even
when they are well defined, they do not converge to $f$ uniformly
on $\R^n$. On the other hand, the convolutions
$f*\delta_\varepsilon$ (where
$\delta_{\varepsilon}=\varepsilon^{-n}\delta(x/\varepsilon)$,
$\delta\geq 0$ being a $C^\infty$ function with bounded support and
$\int_{\R^n}\delta=1$) are always well defined, but they only
provide uniform approximation of $f$ on {\em bounded} sets. Now,
partitions of unity cannot be used to glue
these local convex approximations into a global approximation,
because they do not preserve convexity. To see why this is so, let
us consider the simple case of a $C^2$ convex function
$f:\R\to\R$, to be approximated by $C^\infty$ convex functions.
Take two bounded intervals $I_1\subset I_2$, and $C^\infty$
functions $\theta_1, \theta_2:\R\to [0,1]$ such that
$\theta_1+\theta_2=1$ on $\R$, $\theta_1 =1$ on $I_1$, and
$\theta_2=1$ on $\R\setminus I_2$. Given $\varepsilon_j>0$ one may
find $C^\infty$ convex functions $g_j$ such that $\max\{|f-g_j|,
|f'-g'_j|, |f''-g''_j|\}\leq\varepsilon_j$ on $I_j$. If
$g=\theta_1 g_1+\theta_2 g_2$ one has
    $$
g''=g''_1\theta_1+g''_2\theta_2+ 2(g'_1-g'_2)\theta'_1
+(g_1-g_2)\theta''_1.
    $$
If $f''>0$ on $I_2$ then by choosing $\varepsilon_i$ small enough
one can control this sum and get $g''\geq 0$, but if the $g''_i=0$
vanish somewhere there is no way to do this (even if we managed to
have $g_2\geq g_1$ and $g'_2\geq g'_1$, as $\theta''_1$ must
change signs).

In \cite{Greene5, Greene3, Greene4} Greene and Wu studied the
question of approximating a convex function defined on a
(finite-dimensional) Riemannian manifold $M$\footnote{In
Riemannian geometry convex functions have been used, for instance,
in the investigation of the structure of noncompact manifolds of
positive curvature by Cheeger, Greene, Gromoll, Meyer, Siohama, Wu
and others, see \cite{GromollMeyer, CheegerGromoll, Greene1,
Greene2, Greene3, Greene4}. The existence of global convex
functions on a Riemannian manifold has strong geometrical and
topological implications. For instance \cite{Greene1}, every
two-dimensional manifold which admits a global convex function
that is locally nonconstant must be diffeomorphic to the plane,
the cylinder, or the open M{\"o}bius strip.}, and they showed that if
$f:M\to\R$ is strongly convex (in the sense of the following
definition), then for any number $\varepsilon>0$ one can find a
$C^\infty$ strongly convex function $g$ such that
$|f-g|\leq\varepsilon$ on all of $M$.

\begin{definition}
{\em A $C^{2}$ function $\varphi:M\to\mathbb{R}$
is called strongly convex if its second derivative along any nonconstant
geodesic is strictly positive everywhere on the geodesic. A (not
necessarily smooth) function $f:M\to\mathbb{R}$ is said to be
strongly convex provided that for every $p\in M$ and every
$C^{\infty}$ strongly convex function $\varphi$ defined on a
neighborhood of $p$ there is some $\varepsilon>0$ such that
$f-\varepsilon\varphi$ is convex on the neighborhood.\footnote{We
warn the reader that, in Greene and Wu's papers, what we have just
defined as strong convexity is called strict convexity. We have
changed their terminology since we will be mainly concerned with
the case $M=\R^n$, where one traditionally defines a strictly
convex function as a function $f$ satisfying $f\left(
(1-t)x+ty\right) < (1-t)f(x)+tf(y)$ if $0<t<1$.}}
\end{definition}

This solves the problem when the given function $f$ is strongly
convex. However, as Greene and Wu pointed out, their method cannot
be used when $f$ is not strongly convex. This is inconvenient
because strong convexity is a very strong condition: for instance,
the function $f(x)=x^4$ is strictly convex, but not strongly
convex on any neighborhood of $0$. However, as shown by Smith in
\cite{Smith}, this is a necessary condition in the general
Riemannian setting: for each $k=0,1,..., \infty$, there exists a
flat Riemannian manifold $M$ such that on $M$ there is a $C^k$
convex function which cannot be globally approximated by a
$C^{k+1}$ convex function (here $C^{\infty+1}$ means real
analytic). There are no results characterizing the manifolds on
which global approximation of convex functions by smooth convex
functions is possible. Even in the most basic case $M=\R^n$,
we have been unable to find any reference dealing with the problem of finding smooth global
approximations of (not necessarily Lipschitz or strongly) convex functions.

The main purpose of this note is to prove the following.

\begin{theorem}\label{uniform approximation of convex by real analytic convex}
Let $U\subseteq\R^n$ be open and convex. For every convex
function $f: U\to\R$ and every $\varepsilon>0$ there exists a
real-analytic convex function $g:U\to\R$ such that $f-\varepsilon
\leq g\leq f$.
\end{theorem}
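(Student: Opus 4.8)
The plan is to reduce the unbounded case to the bounded case, where convolution works. The key difficulty, as the introduction explains, is that partitions of unity destroy convexity when we try to patch local approximations. My strategy would be to avoid patching altogether and instead build a single global approximation by a clever summation scheme that respects convexity.

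First I would handle the case $U = \mathbb{R}^n$. The idea is to exhaust $\mathbb{R}^n$ by a nested sequence of convex bodies $K_1 \subset K_2 \subset \cdots$ (say closed balls of radius $j$), and on each shell obtain a good smooth convex approximation of $f$ via mollification: $f_j := f * \delta_{\varepsilon_j}$ is convex and converges to $f$ uniformly on $K_{j+1}$ as $\varepsilon_j \searrow 0$, so we may pick $\varepsilon_j$ so small that $\|f_j - f\|_{\infty, K_{j+1}}$ is as small as we like. The trick to assemble these into a single convex function is to take a \emph{maximum} rather than a convex combination: $\sup_j (f_j - c_j)$ for suitable constants $c_j > 0$. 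A supremum of convex functions is convex, and by choosing $c_j$ decreasing to $0$ fast enough (using that on any fixed compact set only finitely many terms can be active because $f_j \to f$ and $f$ has superlinear-enough comparison, or more carefully exploiting that on $K_m$ one has uniform lower bounds forcing the tail to be dominated), the $\sup$ is locally a finite max, hence locally Lipschitz, lies below $f$, and stays within $\varepsilon$ of $f$. One must be careful to arrange that each $f_j - c_j \le f$ globally (not just on $K_{j+1}$) — here one uses that $f_j \le f + \omega_j$ on $K_{j+1}$ and that outside $K_{j+1}$ the term $f_j - c_j$ is dominated by a later, better-fitting term; this is the delicate bookkeeping step.

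This produces a \emph{locally Lipschitz} convex function $h$ with $f - \varepsilon \le h \le f$, but not a smooth one. The final step upgrades smoothness: since $h$ is now \emph{locally Lipschitz} and convex, one can apply the classical bounded-set approximation results (mollification, or better the integral-convolution smoothing that yields real-analytic output) to $h$ — but again only on bounded sets — and iterate the same max-of-shells construction. Alternatively, and more cleanly, one observes that a locally Lipschitz convex function can be approximated uniformly on all of $\mathbb{R}^n$ by real-analytic convex functions directly via a weighted convolution $g(x) = \int h(x + \sigma(x)y)\, \delta(y)\, dy$ with a slowly varying $\sigma$, or via the heat semigroup applied after a Lipschitz truncation of the gradient; the point is that local Lipschitzness is exactly what makes the error of a position-dependent mollification controllable everywhere. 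To get $g \le f$ with a one-sided error I would mollify and then subtract the (now uniformly small) positive error bound.

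The main obstacle I anticipate is the bookkeeping in the max-of-shells step: ensuring simultaneously that (i) each shifted mollification $f_j - c_j$ lies everywhere below $f$, (ii) the supremum is locally a finite maximum so that it is genuinely convex and finite-valued (not identically $+\infty$ or merely lower-semicontinuous), and (iii) the supremum stays within $\varepsilon$ of $f$ uniformly. Balancing the mollification parameters $\varepsilon_j$ against the shifts $c_j$ against the exhaustion rate of $K_j$ is where all the work lies; once a locally Lipschitz global convex approximant exists, the passage to real-analytic should follow from a position-dependent convolution argument that is routine given local Lipschitzness. For general open convex $U$ one can either run the same argument with $K_j \nearrow U$ taken to be convex bodies compactly contained in $U$, or first reduce to $U = \mathbb{R}^n$ via a diffeomorphism — though a diffeomorphism need not preserve convexity, so the direct exhaustion inside $U$ is the safer route.
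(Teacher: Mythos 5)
Your plan has two genuine gaps, and they occur at exactly the two places where the paper has to work hardest.

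\textbf{The sup-of-shifted-mollifications step does not close.} You want $\sup_j (f_j - c_j)$ with $f_j = f * \delta_{\varepsilon_j}$ and constants $c_j$ downshifting each term below $f$ globally. For a convex $f$, Jensen gives $f_j \ge f$, so the constraint is $\sup_{\R^n}(f_j - f) \le c_j$. But for a convex $f$ that is not globally Lipschitz this supremum is typically $+\infty$: take $f(x) = e^x$ on $\R$, where $f * \delta_\varepsilon = C_\varepsilon e^x$ with $C_\varepsilon > 1$, so $f_\varepsilon - f = (C_\varepsilon - 1)e^x \to \infty$. No constant shift $c_j$ will bring $f_j$ below $f$ everywhere, and if any single term in your supremum exceeds $f$ somewhere, then the sup exceeds $f$ there, so the side condition $g \le f$ fails. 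The remark that the offending term is ``dominated by a later, better-fitting term'' is irrelevant: domination within the sup does not cap the sup from above by $f$. What is missing is the Lipschitz truncation \emph{before} mollifying --- precisely Proposition \ref{uniform approximation of Lipschitz functions is enough} of the paper: replace $f$ by $g_L(x) = \inf_y\{f(y) + L\,\|x-y\|\}$, which is globally $L$-Lipschitz, satisfies $g_L \le f$ on all of $U$ and $g_L = f$ on the bounded set where $\mathrm{Lip}(f) \le L$; only then does a global mollification sit within a uniformly controlled band, and only then can a constant downshift put it below $f$. This is the ingredient your ``delicate bookkeeping'' cannot substitute for. (Also, a raw $\sup$ is not smooth where two terms cross; the paper's Lemma \ref{smooth maxima} provides a \emph{smooth} convex max $M_\varepsilon$ and an iterative gluing $g_n = M_{\varepsilon/10^n}(g_{n-1}, h_n)$ precisely so the glued function inherits $C^k$ regularity and stabilizes on each $B_n$.)

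\textbf{The upgrade to real-analytic is circular and, as described, false.} Your step 2 starts from ``$h$ is locally Lipschitz and convex, now a position-dependent mollification gives a global real analytic convex approximation.'' But \emph{every} finite convex function on an open set is already locally Lipschitz, so if this step worked it would apply directly to $f$ and render your step 1 void --- which is a signal something is wrong. And indeed position-dependent mollification $x \mapsto \int h(x + \sigma(x)y)\,\delta(y)\,dy$ does not preserve convexity when $\sigma$ varies: that is exactly the partitions-of-unity obstruction the introduction spells out. The heat-kernel convolution $h * H_\varepsilon$ does give real-analytic output and does preserve convexity, but it converges uniformly on $\R^n$ only for globally Lipschitz $h$, so you are back to needing a global Lipschitz bound. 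The paper's actual route to real analyticity is quite different and much longer: reduce to $C^\infty$ \emph{strongly} convex approximants, then invoke Whitney's $C^2$-fine approximation with the strong-convexity margin $\eta = \tfrac12\min\{\varepsilon, \min_{\|v\|=1} D^2 h(x)(v)^2\}$ absorbing the perturbation of the Hessian. Getting strong convexity requires the dichotomy of Lemma \ref{reduction to Rk with k less than n} (either $f$ factors through a lower-dimensional projection plus an affine map, allowing induction on $n$, or $f$ is supported at every point by an $(n+1)$-dimensional corner, and such corners admit strongly convex smooth approximations by Lemma \ref{strongly convex approximation of corners}). None of this appears in your sketch, and the compactly supported mollifier you invoke produces only $C^\infty$, not real-analytic, output.
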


This result is optimal in several ways, as in general it is not possible to obtain $C^{0}$-fine approximation of convex functions by $C^1$ convex functions on $\R^n$ when $n\geq 2$ (and even in the case $n=1$ this kind of approximation is not possible from below); see the counterexamples in Section 7 below.

In showing this theorem we will develop a gluing technique for
convex functions which will prove to be useful also in the setting
of Riemannian manifolds or Banach spaces.

\begin{definition}\label{defn approx from below}
{\em Let $X$ be $\R^n$, or a Riemannian manifold (not necessarily
finite-dimensional), or a Banach space, and let $U\subseteq X$ be open and convex.
We will say that a continuous convex function $f:U\to\R$ can
be approximated from below by $C^k$ convex functions, uniformly on
bounded subsets of $U$, provided that for every bounded set
$B$ with $\textrm{dist}(B, \partial U)>0$ and every $\varepsilon>0$ there
exists a $C^k$ convex function $g:U\to\R$ such that
\begin{enumerate}
\item $g\leq f$ on $U$, and
\item $f-\varepsilon\leq g$ on $B$.
\end{enumerate}
}
\end{definition}
(In the case $U=X$ we will use the convention that $\textrm{dist}(B, \partial U)=\infty$ for every bounded set $B\subset X$.)

\begin{theorem}[Gluing convex approximations]\label{main theorem}
Let $X$ be $\R^n$, or a Riemannian manifold (not necessarily
finite-dimensional), or a Banach space, and let $U\subseteq X$ be open and convex.
Assume that $U=\bigcup_{n=1}^{\infty}B_n$, where the $B_n$ are open bounded
convex sets such that $\textrm{dist}(B_n, \partial U)>0$ and $\overline{B_n}\subset B_{n+1}$
for each $n$. Assume also that $U$ has the property that every continuous, convex function $f:U\to\R$ can be
approximated from below by $C^k$ convex (resp. strongly convex) functions ($k\in\N\cup\{\infty\}$),
uniformly on bounded subsets of $U$.

Then every continuous convex function $f:U\to\R$ can be
approximated from below by $C^k$ convex (resp. strongly convex) functions, uniformly on
$U$.
\end{theorem}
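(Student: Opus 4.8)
The strategy is to manufacture, for a given continuous convex $f:U\to\R$ and a given $\varepsilon>0$, a sequence of $C^k$ convex (resp.\ strongly convex) functions $g_n:U\to\R$ which is \emph{eventually constant on every $\overline{B_m}$}; its pointwise limit will then be the desired global approximation. The only tool needed from outside is a suitable smooth maximum. For each $\delta>0$ one fixes $M_\delta:\R^2\to\R$ of class $C^\infty$, jointly convex and non-decreasing in each variable, with $\max(s,t)-\delta\le M_\delta(s,t)\le\max(s,t)$ for all $(s,t)$, and with $M_\delta(s,t)=s$ whenever $s-t\ge\delta$; explicitly $M_\delta(s,t)=t+m(s-t)$, where $m\in C^\infty(\R)$ is convex, non-decreasing, $0\le m'\le 1$, $m\equiv 0$ on $(-\infty,0]$, and $m(u)=u$ for $u\ge\delta$. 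The two properties we rely on are: (a) if $\varphi,\psi:U\to\R$ are $C^k$ convex (resp.\ strongly convex), then so is $M_\delta(\varphi,\psi)$ --- joint convexity and monotonicity of $M_\delta$ together with convexity of $\varphi,\psi$ give convexity, and since $\partial_sM_\delta+\partial_tM_\delta\equiv 1$, the second derivative of $M_\delta(\varphi,\psi)$ along any geodesic is at least $\min\{(\varphi\circ\gamma)'',(\psi\circ\gamma)''\}$, which handles strong convexity; (b) $M_\delta(\varphi,\psi)\le\max\{\varphi,\psi\}$, so composing never pushes a function above an upper bound shared by its two arguments --- this is what keeps the approximation genuinely \emph{from below}.

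Put $\varepsilon_n=\varepsilon\,(1-2^{-n})$, so $\varepsilon_n\uparrow\varepsilon$. I would construct the $g_n$ by induction so that for every $n$: (i) $g_n\le f$ on $U$; (ii) $g_n\ge f-\varepsilon_n$ on $\overline{B_n}$; (iii) $g_{n+1}=g_n$ on $\overline{B_{n-1}}$ (with the convention $\overline{B_0}=\emptyset$). For $g_1$ one simply invokes the hypothesis for the bounded set $\overline{B_1}$ (which has positive distance to $\partial U$ because $\overline{B_1}\subset B_2$) with error $\varepsilon_1$. For the inductive step, from $g_n$ one first uses the hypothesis for $\overline{B_{n+2}}$ with a small error $\varepsilon_n'>0$ to get a $C^k$ convex (resp.\ strongly convex) $h_n$ with $h_n\le f$ on $U$ and $h_n\ge f-\varepsilon_n'$ on $\overline{B_{n+2}}$; one then sets $c_n=\max\{0,\sup_{\overline{B_{n-1}}}(h_n-g_n)\}+\delta_n$ with $\delta_n>0$ small, $\widetilde h_n=h_n-c_n$, and $g_{n+1}=M_{\delta_n}(g_n,\widetilde h_n)$. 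Now (iii) holds because $\widetilde h_n\le g_n-\delta_n$ on $\overline{B_{n-1}}$ forces $M_{\delta_n}(g_n,\widetilde h_n)=g_n$ there; (i) holds by property (b) above since $g_n\le f$ and $\widetilde h_n\le h_n\le f$; and, since on $\overline{B_{n-1}}\subset\overline{B_n}$ one has $h_n-g_n\le f-(f-\varepsilon_n)=\varepsilon_n$ and hence $c_n\le\varepsilon_n+\delta_n$, on $\overline{B_{n+1}}\subset\overline{B_{n+2}}$ one gets $g_{n+1}\ge\widetilde h_n\ge f-\varepsilon_n'-c_n\ge f-\varepsilon_n-\varepsilon_n'-\delta_n$, which is $\ge f-\varepsilon_{n+1}$ as soon as $\varepsilon_n'+\delta_n\le\varepsilon_{n+1}-\varepsilon_n=\varepsilon\,2^{-(n+1)}$; we arrange this when choosing $\varepsilon_n',\delta_n$.

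Iterating (iii) gives $g_m=g_n$ on $\overline{B_{n-1}}$ for all $m\ge n$, so $g(x):=\lim_n g_n(x)$ is well defined for every $x\in U$ (each $x$ lies in some $B_{n-1}$) and coincides with $g_n$ on the open set $B_{n-1}$. Since the $B_{n-1}$ cover $U$, $g$ is $C^k$ convex (resp.\ strongly convex): local $C^k$ regularity and the strong-convexity inequality are inherited from the $g_n$, and global convexity follows because any two points of $U$ lie in a common convex $B_N$ on which $g=g_{N+1}$. Finally $g\le f$ on $U$ by (i), while (ii) gives $g=g_n\ge f-\varepsilon_n>f-\varepsilon$ on each $\overline{B_{n-1}}$; hence $f-\varepsilon\le g\le f$ on $U$, as required.

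The crux --- and the reason a naive partition-of-unity gluing fails --- is the bookkeeping in the inductive step. Shifting $\widetilde h_n$ downward enough to be dominated by $g_n$ on $\overline{B_{n-1}}$ can cost up to $c_n\approx\varepsilon$, and a fixed budget $\varepsilon$ cannot absorb a cost of this order at each of infinitely many steps. The remedy is to let the approximation quality improve geometrically along the sequence, i.e.\ $\varepsilon_n\uparrow\varepsilon$, so that the per-step cost $\varepsilon_n'+\delta_n$ need only fit inside the gap $\varepsilon_{n+1}-\varepsilon_n$; this is exactly what makes the induction close. A second point requiring care is that $M_\delta$ must satisfy $M_\delta\le\max$, not merely the usual $\max\le M_\delta\le\max+\delta$ --- which is why one cannot take $M_\delta$ real analytic and must settle for a $C^\infty$ one coinciding with a coordinate projection outside a strip; this is harmless here since the theorem concerns $C^k$ targets with $k\in\N\cup\{\infty\}$.
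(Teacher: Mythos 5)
Your overall strategy is the paper's: glue successive approximations on the exhaustion $B_1\subset B_2\subset\cdots$ by a smooth maximum $M_\delta$, chosen so that (a) it preserves convexity/strong convexity and (b) $M_\delta(g_n,\widetilde h_n)$ coincides with $g_n$ wherever $g_n$ dominates $\widetilde h_n$ by at least $\delta$, forcing eventual constancy on each $\overline{B_m}$ and hence the $C^k$ regularity of the limit. The geometric budget $\varepsilon_n\uparrow\varepsilon$ is also how the paper closes the induction. Up to cosmetic differences this is exactly the published argument.

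There is, however, a genuine gap: the function $M_\delta$ you ask for does \emph{not exist}. You want $M_\delta$ convex (to preserve convexity after composing with convex $\varphi,\psi$) together with both $M_\delta(s,t)\le\max\{s,t\}$ for all $(s,t)$ and $M_\delta(s,t)=s$ for $s-t\ge\delta$ (and symmetrically $=t$ for $t-s\ge\delta$). These are incompatible. Indeed, fix $(s_0,t_0)$ and take $a<b$ with $s_0+a-t_0\ge\delta$; then $(s_0+a,t_0)$ lies on the segment from $(s_0,t_0)$ to $(s_0+b,t_0)$, at parameter $\lambda=a/b$, and convexity gives
$$s_0+a=M_\delta(s_0+a,t_0)\le\bigl(1-\tfrac{a}{b}\bigr)\,M_\delta(s_0,t_0)+\tfrac{a}{b}\,(s_0+b),$$
which simplifies to $M_\delta(s_0,t_0)\ge s_0$; symmetrically $M_\delta(s_0,t_0)\ge t_0$, so $M_\delta\ge\max$ everywhere. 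This also shows your explicit formula $M_\delta(s,t)=t+m(s-t)$ with $m$ convex $C^\infty$, $m\equiv 0$ on $(-\infty,0]$, $0\le m'\le 1$, and $m(u)=u$ for $u\ge\delta$ is inconsistent: $m'(0)=0$ and $m'\le 1$ force $m(\delta)=\int_0^\delta m'<\delta$. So the central inequality you invoke in step (i), namely $g_{n+1}=M_{\delta_n}(g_n,\widetilde h_n)\le\max\{g_n,\widetilde h_n\}\le f$, is not available.

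The fix is precisely what the paper does. One must accept the paper's $M_\delta$ with $\max\le M_\delta\le\max+\delta/2$ and $M_\delta=\max$ outside the strip $\{|s-t|<\delta\}$, and instead maintain a strict ``cushion'' below $f$ as an invariant: at stage $n$ the new approximant $h_n$ is constructed to lie below a \emph{downshifted} target $f-(\varepsilon+\tfrac{\varepsilon}{2}+\cdots+\tfrac{\varepsilon}{2^{n-2}})$ rather than below $f$ itself, and $g_{n+1}=M_{\varepsilon/10^{n+1}}(g_n,h_{n+1})$ is then $\le f$ because the tiny overshoot $\varepsilon/10^{n+1}$ is absorbed by the cushion $\varepsilon/2^{n}$ separating $\max\{g_n,h_{n+1}\}$ from $f$. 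The resulting limit then satisfies $f-2\varepsilon\le g\le f$ rather than $f-\varepsilon\le g\le f$, which is of course equally good. If you replace your (nonexistent) one-sided $M_\delta$ by this accounting, the rest of your induction goes through unchanged.
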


From this result (and from its proof and the known results on approximation on bounded sets) we will easily deduce the
following corollaries.

\begin{corollary}\label{uniform approximation of convex functions by smooth convex functions in Rn}
Let $U\subseteq\R^n$ be open and convex. For every convex function
$f: U\to\R$ and every $\varepsilon>0$ there exists a $C^\infty$
convex function $g:U\to\R$ such that $f-\varepsilon \leq g\leq f$.
Moreover $g$ can be taken so as to preserve local Lipschitz
constants of $f$ (meaning $\textrm{Lip}(g_{|_B})\leq
\textrm{Lip}(f_{|_{(1+\varepsilon)B}})$ for every ball $B\subset
U$). And if $f$ is strictly (or strongly) convex, so can $g$ be
chosen.
\end{corollary}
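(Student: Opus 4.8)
The plan is to derive the corollary from Theorem~\ref{main theorem}, applied with $X=\R^n$ and $k=\infty$, by verifying its two structural hypotheses and then reading the two refinements (preservation of local Lipschitz constants and of strict/strong convexity) off the construction used in its proof.

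First I would exhibit the required exhaustion of $U$. If $U=\R^n$, take $B_n$ to be the open ball of radius $n$. If $U\neq\R^n$, set $B_n=\{x\in U:\ |x|<n\ \text{and}\ \textrm{dist}(x,\R^n\setminus U)>1/n\}$; the second condition describes the set $\{x:\ \overline{B}(x,1/n)\subset U\}=\bigcap_{|v|\leq 1/n}(U-v)$, an intersection of translates of $U$, so each $B_n$ is open, bounded and convex. One checks that $\textrm{dist}(B_n,\partial U)\geq 1/n>0$, that the sequence increases with $\overline{B_n}\subseteq B_{n+1}$ (because $1/n>1/(n+1)$), and that $\bigcup_n B_n=U$. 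Thus the structural hypothesis of Theorem~\ref{main theorem} is met.

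Next, and this is the real content beyond Theorem~\ref{main theorem}, I would check that every continuous convex $f:U\to\R$ can be approximated from below by real-analytic (hence $C^\infty$) convex functions, uniformly on bounded subsets of $U$. Fix a bounded $B$ with $\textrm{dist}(B,\partial U)>0$ and $\varepsilon>0$; then $\overline B$ is a compact subset of $U$. For each $x_0\in\overline B$ pick $p_{x_0}\in\partial f(x_0)$ and set $\ell_{x_0}(x)=f(x_0)+\langle p_{x_0},\,x-x_0\rangle$; the subgradient inequality gives $\ell_{x_0}\leq f$ on all of $U$, while continuity gives $\ell_{x_0}>f-\varepsilon/2$ on a neighbourhood of $x_0$. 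Covering $\overline B$ by finitely many such neighbourhoods, for points $x_1,\dots,x_m$, the convex piecewise-affine function $P:=\max_{1\leq i\leq m}\ell_{x_i}$ satisfies $P\leq f$ on $U$ and $P\geq f-\varepsilon/2$ on $B$. I then regularize the maximum by
\[
g:=\frac1\theta\,\log\!\Big(\sum_{i=1}^m e^{\theta\ell_{x_i}}\Big)-\frac{\log m}{\theta}\qquad(\theta>0),
\]
which is real-analytic and convex and satisfies $P-\tfrac{\log m}{\theta}\leq g\leq P$ everywhere; hence $g\leq f$ on $U$, and $g\geq f-\varepsilon$ on $B$ once $\theta$ is large enough. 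Since $\nabla g$ is a convex combination of the $p_{x_i}$, one has $|\nabla g|\leq\max_i|p_{x_i}|$, and each $|p_{x_i}|$ is bounded by the Lipschitz constant of $f$ on a small ball about $x_i$; this is the source of the local-Lipschitz control. If moreover $f$ is strictly convex, then its subgradients at distinct points are distinct and cannot all lie in an affine hyperplane (for otherwise $f$ would be affine along a line), so by adjoining finitely many extra supporting affine functions one may arrange that $\{p_{x_i}\}$ affinely spans $\R^n$; then $\nabla^2g=\theta\,\mathrm{Cov}_w(p_{x_i})$ (with $w$ the softmax weights, all positive) is positive definite at every point, i.e.\ $g$ is strongly — hence strictly — convex. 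This supplies exactly the ``strongly convex'' local input required by Theorem~\ref{main theorem}.

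Finally, Theorem~\ref{main theorem} with $k=\infty$ produces a $C^\infty$ convex $g:U\to\R$ with $f-\varepsilon\leq g\leq f$ on all of $U$, which is the main assertion. The two refinements are not in the bare statement of Theorem~\ref{main theorem}, so I would extract them from the gluing construction in its proof: there the global $g$ is assembled as a telescoping regularized maximum of the local approximants, so near any point $g$ agrees with a smooth maximum of finitely many of them, whence $\nabla g$ at a point is a convex combination of their gradients and $\nabla^2g$ is the corresponding combination plus a positive-semidefinite smoothing term; this carries the local-Lipschitz bound and strict/strong convexity from the local pieces to $g$. I expect the only genuine obstacle to be this last piece of bookkeeping: to obtain $\textrm{Lip}(g|_B)\leq\textrm{Lip}(f|_{(1+\varepsilon)B})$ simultaneously for \emph{every} ball $B\subset U$ — rather than for a single prescribed bounded set — the scales in the local construction (the radii of the neighbourhoods of the supporting points) must be made to shrink as one approaches $\partial U$, so that $\nabla g(x)$ is controlled by the Lipschitz constant of $f$ on a ball about $x$ whose radius is a small multiple of $\textrm{dist}(x,\partial U)$.
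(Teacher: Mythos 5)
Your route to the main inequality $f-\varepsilon\leq g\leq f$ is correct but genuinely different from the paper's in the local step. Where you build a piecewise-affine minorant $P=\max_i\ell_{x_i}$ from subgradients and smooth it by a log-sum-exp, the paper first reduces to the Lipschitz case via Proposition~\ref{uniform approximation of Lipschitz functions is enough} (replacing $f$ on a bounded $B$ by the inf-convolution $x\mapsto\inf_{y\in U}\{f(y)+L\|x-y\|\}$, which equals $f$ on $B$, lies below $f$ on $U$, and is globally $L$-Lipschitz) and then approximates that Lipschitz function by convolution with a compactly supported $C^{\infty}$ mollifier $\delta_{\varepsilon}$; both proofs then feed into Theorem~\ref{main theorem} for the global gluing. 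Your version is more self-contained (no inf-convolution, no mollifier), and your observation that a log-sum-exp of $\ell_{x_i}$'s is strongly convex as soon as $\{p_{x_i}\}$ affinely spans $\R^n$ — which strict convexity lets one force by adjoining finitely many extra supporting hyperplanes, since the subgradient image of a strictly convex function cannot lie in a hyperplane — is a clean way to get the strict/strong convexity refinement.

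The refined claim $\textrm{Lip}(g|_B)\leq\textrm{Lip}(f|_{(1+\varepsilon)B})$ is, however, not secured by your construction as it stands, and the obstruction is more basic than the near-$\partial U$ rescaling you anticipate. Your bound $|\nabla g|\leq\max_i|p_{x_i}|$ is global on the bounded piece, not local. The active supporting hyperplane of $P$ at a point $x$ is a subgradient of $f$ at $x$ only when $P(x)=f(x)$; wherever $P(x)<f(x)$ — which is the typical situation when $\overline B$ is covered by finitely many neighbourhoods chosen merely to make $\ell_{x_i}>f-\varepsilon/2$ — the active slope $p_{x_i}$ need not be controlled by $\textrm{Lip}(f|_{B(x,r)})$ for small $r$. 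A one-dimensional example: $f(t)=e^{t}$, $x_1=0$, $x_2=10$; for $t\in(9,10)$ the active supporting line is $\ell_{x_2}$, with slope $e^{10}$, while $\textrm{Lip}(f|_{B(t,\delta)})\approx e^{t+\delta}\ll e^{10}$. To repair this within your framework one would need a net of support points whose mesh adapts to the modulus of convexity of $f$ (so that the active index at $x$ is always close to $x$) and then show the softmax weights concentrate on nearby indices — none of which is automatic. The paper sidesteps this entirely because the mollification $f_\varepsilon=f*\delta_\varepsilon$ satisfies $\textrm{Lip}(f_\varepsilon|_B)=\textrm{Lip}(f|_{(1+\varepsilon)B})$ identically (the value $f_\varepsilon(x)$ depends only on $f$ over $B(x,\varepsilon)$), and Proposition~\ref{properties of M(f,g)}(7) then carries this local bound through the gluing. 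So either adopt the convolution for the bounded-set step, or supply the adaptive-net argument, before the Lipschitz refinement of the corollary is in hand.
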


\begin{corollary}\label{uniform approximation of convex functions on Cartan Hadamard manifolds}
Let $M$ be a Cartan-Hadamard Riemannian
manifold (not necessarily finite dimensional), and $U\subseteq M$ be open and convex. For every
convex function $f: U\to\R$ which is bounded on bounded subsets $B$ of $U$ with $\textrm{dist}(B, \partial U)>0$,
and for every $\varepsilon>0$ there exists
a $C^1$ convex function $g:U\to\R$ such that $f-\varepsilon \leq
g\leq f$. Moreover $g$ can be chosen so as to preserve the set of
minimizers and the local Lipschitz constants of $f$. And, if $f$ is strictly convex, so can $g$ be
taken.
\end{corollary}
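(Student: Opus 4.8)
The idea is to derive this at once from the gluing theorem, Theorem~\ref{main theorem}, applied with $X=M$ and $k=1$. Two things must be supplied for an open convex subset $U$ of a Cartan-Hadamard manifold: a nested bounded convex exhaustion of $U$, and the \emph{local} approximation property required in the hypothesis of that theorem.

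For the exhaustion, fix $p\in U$. On a Cartan-Hadamard manifold the distance function $x\mapsto d(x,p)$ is (geodesically) convex, and, $U$ being convex, the function $x\mapsto\operatorname{dist}(x,M\setminus U)$ is concave on $U$ (geodesics joining two points of $U$ stay in $U$, and the distance to the complement of a convex set behaves as in $\R^{n}$; nonpositive curvature is used here). Hence, for any strictly increasing sequence $r_{n}\to\infty$, the sets
\[
B_{n}:=\bigl\{x\in U:\ d(x,p)<r_{n}\ \text{ and }\ \operatorname{dist}(x,M\setminus U)>1/n\bigr\}
\]
are open, bounded, convex, satisfy $\operatorname{dist}(B_{n},\partial U)\ge 1/n>0$ and $\overline{B_{n}}\subset B_{n+1}$, and have $\bigcup_{n}B_{n}=U$ (each point of $U$ lies at finite distance from $p$ and, $U$ being open, at positive distance from $M\setminus U$; when $U=M$ the second defining condition is simply omitted). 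This is exactly the decomposition $U=\bigcup_{n}B_{n}$ needed in Theorem~\ref{main theorem}.

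For the local property, note first that the hypothesis on $f$ --- boundedness on every bounded $B\subset U$ with $\operatorname{dist}(B,\partial U)>0$ --- forces $f$ to be continuous on $U$, since a convex function bounded above on a neighborhood of a point is continuous (indeed locally Lipschitz) there; thus $f$ falls within Definition~\ref{defn approx from below}. The remaining ingredient is the known fact that on a Cartan-Hadamard manifold every continuous convex function on an open convex set can be approximated from below, uniformly on bounded subsets $B$ with $\operatorname{dist}(B,\partial U)>0$, by $C^{1}$ convex functions, and that these local approximants may in addition be taken so as to preserve the local Lipschitz constants of $f$, to leave $\operatorname{argmin}f$ unchanged whenever the infimum of $f$ is attained, and to be strictly convex when $f$ is. With both hypotheses of Theorem~\ref{main theorem} verified, that theorem yields a $C^{1}$ convex $g:U\to\R$ with $f-\varepsilon\le g\le f$ on all of $U$. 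For the ``moreover'' assertions one invokes not just the statement but the \emph{proof} of Theorem~\ref{main theorem}: the glued function $g$ is assembled from the local approximants through operations that carry over local Lipschitz bounds on a fixed ball, the set of minimizers, and strict convexity, so $g$ inherits whichever of these the local approximants were chosen to have.

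The only non-routine points are the two ingredients just isolated: that a convex open subset of a possibly infinite-dimensional Cartan-Hadamard manifold admits the nested bounded convex exhaustion above --- which rests on the concavity of $x\mapsto\operatorname{dist}(x,M\setminus U)$ and has to be argued without using local compactness --- and the $C^{1}$ local approximation from below that simultaneously controls the Lipschitz constants and the minimizers. Granting these, the passage from bounded subsets to all of $U$ is entirely the content of Theorem~\ref{main theorem}, and nothing more need be proved.
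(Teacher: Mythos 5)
Your high-level plan is the right one --- reduce to Theorem \ref{main theorem} by producing a nested exhaustion of $U$ by bounded open convex sets and verifying the local (bounded-set) approximation hypothesis --- and your construction of the exhaustion $B_n = \{x\in U : d(x,p)<r_n,\ \operatorname{dist}(x,M\setminus U)>1/n\}$ is essentially what one wants (concavity of $\operatorname{dist}(\cdot,M\setminus U)$ on a CAT(0) space and convexity of $d(\cdot,p)$ do give openness, convexity, boundedness, and the nesting of closures). The paper does not stop to spell this out, so that part of your write-up is a useful explicitation.

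The genuine gap is in the sentence where you declare it ``the known fact'' that on a Cartan--Hadamard manifold every continuous convex function on an open convex set can be approximated from below, uniformly on bounded subsets, by $C^1$ convex functions preserving local Lipschitz constants, minimizers, and strict convexity. This is not a black box one can invoke; it is precisely the nontrivial input, and the paper has two separate devices for supplying it. First, Proposition \ref{uniform approximation of Lipschitz functions is enough} reduces local-from-below approximation of a general convex $f$ (bounded on bounded sets) to \emph{global} approximation of \emph{Lipschitz} convex functions: one fixes a bounded $B$, takes $L=\operatorname{Lip}(f_{|_B})$, and replaces $f$ by the $L$-Lipschitz convex minorant $g(x)=\inf_{y\in U}\{f(y)+L\,d(x,y)\}$, which agrees with $f$ on $B$ and lies below $f$ everywhere --- and proving that this $g$ is convex on a Cartan--Hadamard manifold requires the global convexity of the Riemannian distance and the stability of convexity under partial infima, cited from \cite{AF2}. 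Second, the Lipschitz case itself is handled by the Moreau--Yosida inf-convolution $f_\lambda(x)=\inf_y\{f(y)+\tfrac{1}{2\lambda}d(x,y)^2\}$, which on a Cartan--Hadamard manifold produces $C^1$ convex Lipschitz functions with the same minimizers, preserving strict convexity, converging uniformly to $f$. Your proposal mentions neither Proposition \ref{uniform approximation of Lipschitz functions is enough} nor the inf-convolution with squared distance, so the heart of the argument is missing; asserting it as ``known'' hides exactly the step where the Cartan--Hadamard hypothesis and the papers \cite{AF2,AFLM} are actually used.
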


One should expect that the above corollary is not optimal
(in that approximation by $C^{\infty}$ convex functions should be possible).

\begin{corollary}\label{uniform approximation of convex functions on Banach spaces}
Let $X$ be a Banach space whose dual is locally uniformly convex, and $U\subseteq X$ be open and convex.
For every convex function $f: U\to\R$ which is bounded on bounded subsets $B$ of $U$ with
$\textrm{dist}(B, \partial U)>0$, and for every
$\varepsilon>0$ there exists a $C^1$ convex function $g:U\to\R$
such that $f-\varepsilon \leq g\leq f$. Moreover $g$ can be taken
so as to preserve the set of minimizers and the local Lipschitz
constants of $f$. And if $f$ is strictly convex, so
can $g$ be taken.
\end{corollary}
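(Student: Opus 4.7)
The strategy is to apply the gluing theorem (Theorem~\ref{main theorem}). Thus, given a Banach space $X$ with $X^*$ locally uniformly rotund and an open convex $U\subseteq X$, I would verify its two hypotheses: (i) produce an exhaustion of $U$ by bounded open convex sets $B_n$ with $\mathrm{dist}(B_n,\partial U)>0$ and $\overline{B_n}\subset B_{n+1}$; and (ii) show that every continuous convex $f\colon U\to\R$ bounded on bounded subsets $B$ of $U$ with $\mathrm{dist}(B,\partial U)>0$ admits $C^1$ convex approximations from below, uniformly on each such $B$.

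For (i), fix $x_0\in U$ and set
$$
B_n=\bigl\{x\in X:\|x-x_0\|<n\bigr\}\cap\bigl\{x\in X:d(x,X\setminus U)>1/n\bigr\}.
$$
Each $B_n$ is bounded and convex: the second factor is convex because for an open convex set $U$, the distance function $x\mapsto d(x,X\setminus U)$ is concave on $U$, so its super-level sets are convex. One has $\overline{B_n}\subset B_{n+1}$, $\mathrm{dist}(B_n,\partial U)\geq 1/n>0$, and $U=\bigcup_n B_n$ because $U$ is open (every point lies in a ball contained in $U$).

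For (ii), the key observation is that a continuous convex function bounded on a slightly larger bounded set is automatically Lipschitz on $B$; so I may regard $f$ as Lipschitz in a neighborhood of each $B_n$ with some constant $L_n$. Now, when $X^*$ is LUR, the norm of $X$ is Fr\'echet differentiable off the origin and $\|\cdot\|^2$ is convex and $C^1$ on $X$. This allows one to form a Moreau-type infimal convolution
$$
f_\lambda(x)=\inf_{y\in X}\bigl\{\tilde f(y)+\lambda\|x-y\|^2\bigr\},
$$
where $\tilde f$ is a Lipschitz convex extension of the restriction of $f$ to a thickening of $B_n$ obtained by a Hahn--Banach subgradient construction. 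Each $f_\lambda$ is convex, $L_n$-Lipschitz on $B_n$, satisfies $f_\lambda\leq f$ on $B_n$, and $f_\lambda\nearrow f$ uniformly on $B_n$ as $\lambda\to\infty$; the $C^1$ smoothness of $f_\lambda$ follows from a Fenchel-conjugate duality argument tying differentiability of $f_\lambda$ to rotundity of the dual norm. Plugging this into Theorem~\ref{main theorem} produces a $C^1$ convex $g\colon U\to\R$ with $g\leq f$ on $U$ and $f-\varepsilon\leq g$ everywhere.

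For the preservation assertions: the gluing construction of Theorem~\ref{main theorem} is built from convex combinations of the local approximants, so the Lipschitz bounds $L_n$ transfer to the glued function up to factors controlled by $(1+\varepsilon)$; the Moreau envelope automatically satisfies $f_\lambda(x^\ast)=f(x^\ast)$ at any global minimizer $x^\ast$ of $f$ (take $y=x^\ast$ in the defining infimum), and since $g\leq f$, the minimizer set is preserved; strict convexity is preserved by adding a small perturbation $\eta\|\cdot-x_0\|^2$ (which is $C^1$, strictly convex, and of uniformly small amplitude on each $B_n$) to each local $f_\lambda$ before gluing. The main obstacle is step~(ii): in infinite dimensions, smoothness of $\|\cdot\|^2$ alone does not immediately give smoothness of the infimal convolution, and one must exploit carefully the LUR property of $X^*$ (equivalently, Fr\'echet differentiability of the norm on $X$) to conclude that $f_\lambda$ is $C^1$.
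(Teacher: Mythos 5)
Your proposal follows essentially the same route as the paper: reduce via Theorem~\ref{main theorem} and a Lipschitz extension step (the content of Proposition~\ref{uniform approximation of Lipschitz functions is enough}) to uniformly approximating a globally Lipschitz convex function on $X$, and do the smoothing by a Moreau envelope $f_\lambda$, whose $C^1$ regularity is the known consequence of $X^*$ being LUR that the paper cites Str\"omberg for. One detail to correct: the auxiliary perturbation $\eta\|\cdot-x_0\|^2$ is not strictly convex in general under your hypothesis, since LUR of $X^*$ yields Fr\'echet smoothness of the norm on $X$ but not rotundity of $X$; the detour is unnecessary in any case, because (as the paper records) the Moreau envelope of a strictly convex function is already strictly convex, and the smooth maxima of Proposition~\ref{properties of M(f,g)}(8) preserve strict convexity through the gluing. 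Also, the gluing in Theorem~\ref{main theorem} is not built from convex combinations of the local approximants but from the $1$-Lipschitz smooth maxima $M_\varepsilon$; the Lipschitz-constant preservation you want is exactly Proposition~\ref{properties of M(f,g)}(7), not a $(1+\varepsilon)$-factor estimate.
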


A question remains open whether every convex
function $f$ defined on a separable infinite-dimensional Hilbert
space $X$ which is bounded on bounded sets can be globally approximated by $C^2$ convex functions
(notice that Theorem \ref{main theorem} cannot be combined with
the results of \cite{DFH1, DFH2} on smooth and real analytic
approximation of bounded convex bodies in order to give a solution
to this problem. Although one can use these results, together with
the implicit function theorem, to find smooth convex approximations of
$f$ on a bounded set, the approximating functions obtained by this
process are not defined on all of $X$ and are not strongly convex,
hence it is not clear how to extend them to a smooth convex function below $f$ on $X$,
or even if this should be possible at all).


Finally, as a byproduct of the proof of Theorem \ref{uniform
approximation of convex by real analytic convex} we will also
obtain the following characterization of the class of convex
functions that can be globally approximated by strongly convex
functions on $\R^n$.

\begin{proposition}\label{characterization of functions that cannot be approximated by strongly convex functions}
Let $f:\R^n\to\R$ be a convex function. The following
conditions are equivalent:
\begin{enumerate}
\item $f$ cannot be uniformly approximated by strictly convex
functions.
\item $f$ cannot be uniformly approximated by strongly convex
functions.
\item There exist $k<n$, a linear projection $P:\R^n\to\R^k$, a
convex function $c:\R^k\to\R$ and a linear function $\ell:\R^n\to\R$
such that $f=c\circ P +\ell$.
\end{enumerate}
\end{proposition}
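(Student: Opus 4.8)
The plan is to prove the chain of implications $(3)\Rightarrow(2)\Rightarrow(1)$ (trivially, since strongly convex implies strictly convex, so the negation goes the other way — actually let me be careful) and $(1)\Rightarrow(3)$, together with the easy fact that $(3)\Rightarrow(1)$. Let me restructure: since strong convexity implies strict convexity, any uniform approximation by strictly convex functions is weaker to ask for than by strongly convex ones, so \emph{failure} of (1) (approximation by strictly convex) implies failure of (2) (approximation by strongly convex), i.e. $(1)\Rightarrow(2)$ is immediate. Thus it suffices to establish $(2)\Rightarrow(3)$ and $(3)\Rightarrow(1)$.

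For $(3)\Rightarrow(1)$: I would show that a function of the form $f=c\circ P+\ell$ cannot be uniformly approximated by strictly convex functions. The obstruction is affine behavior on the lines in the kernel of $P$. If $v\in\ker P\setminus\{0\}$, then $f$ is affine along every line in direction $v$: $f(x+tv)=f(x)+t\,\ell(v)$. Suppose $g$ is strictly convex with $\|f-g\|_\infty\le\varepsilon$. Consider the restriction $t\mapsto g(x+tv)-t\,\ell(v)$; it stays within $\varepsilon$ of the constant $f(x)$ for all $t\in\R$, yet it is strictly convex in $t$ — and a strictly convex function on $\R$ that is bounded cannot exist (it must tend to $+\infty$ in at least one direction, in fact both, since a convex function on $\R$ is either monotone or coercive; strictness rules out the constant/affine case, forcing unboundedness). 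This contradiction gives $(3)\Rightarrow(1)$.

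For $(2)\Rightarrow(3)$, equivalently the contrapositive $\neg(3)\Rightarrow\neg(2)$: assume $f$ is \emph{not} of the stated product-plus-linear form, and I must produce, for each $\varepsilon>0$, a strongly convex $g$ with $\|f-g\|_\infty\le\varepsilon$. The natural decomposition is to split off the ``affine directions'' of $f$. Define $L=\{v\in\R^n: t\mapsto f(x+tv) \text{ is affine for some (hence every) } x\}$; using convexity one checks $L$ is a linear subspace and that $f$ descends, modulo a linear functional, to a convex function on a complementary subspace that has \emph{no} affine directions — this is exactly how one reaches the normal form, and $\neg(3)$ forces $\dim L<n$ to fail to give a representation only if $L=\{0\}$... more precisely, $\neg(3)$ is equivalent to $L=\{0\}$, i.e. $f$ has no affine directions at all. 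So the real content is: if the convex function $f:\R^n\to\R$ is affine along \emph{no} line, then $f$ can be uniformly approximated by strongly convex functions. Here I would invoke Theorem~\ref{uniform approximation of convex by real analytic convex} (or Corollary~\ref{uniform approximation of convex functions by smooth convex functions in Rn}) to first replace $f$ by a smooth convex $\tilde f$ with $f-\varepsilon/2\le\tilde f\le f$, and then add a small strongly convex perturbation. The candidate is $g(x)=\tilde f(x)+\delta\,\psi(x)$ where $\psi$ is a fixed smooth strongly convex function with controlled growth — but a naive choice like $\delta|x|^2$ destroys uniform closeness. The fix, which is where the proof of Theorem~\ref{uniform approximation of convex by real analytic convex} is reused, is to build $g$ directly via the same gluing/transfer machinery while inserting at each stage a localized strongly convex bump whose size shrinks fast enough on the unbounded pieces; the key quantitative input is that $\tilde f$, having no affine directions, has the property that $\tilde f(x)/|x|\to\infty$ is false in general, so one cannot simply dominate — instead one uses that on each bounded annulus the approximation-on-bounded-sets result already yields strongly convex approximants, and the telescoping sum from Theorem~\ref{main theorem}'s proof glues them while keeping strong convexity (the statement of Theorem~\ref{main theorem} explicitly includes the ``resp.\ strongly convex'' clause).

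The main obstacle is the implication $(2)\Rightarrow(3)$, and within it, the verification that ``no affine direction'' is exactly the right hypothesis and that the gluing preserves strong convexity globally. The subtlety: strong convexity is not a uniformly quantified condition (the lower bound on the second derivative along geodesics is allowed to degenerate at infinity in our definition), which is precisely what makes the telescoping construction of Theorem~\ref{main theorem} compatible with it — each piece contributes its own strictly positive (but possibly decaying) Hessian bump, and their sum remains strongly convex in the pointwise sense. I would carefully isolate the algebraic lemma that $L(f):=\{v: f \text{ affine in direction } v\}$ is a subspace and that $f=c\circ P+\ell$ with $P$ the quotient projection $\R^n\to\R^n/L(f)$, $c$ the induced convex function (which has $L(c)=\{0\}$), and $\ell$ the linear part picked up along $L(f)$; this reduces everything to the two extreme cases handled above.
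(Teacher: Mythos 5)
Your overall architecture is right, and two of the three legs are sound. The implication $(1)\Rightarrow(2)$ is immediate (strong convexity implies strict convexity, so a uniform approximant of one type is automatically of the other), and your argument for $(3)\Rightarrow(1)$ --- a strictly convex function on a line cannot stay within $\varepsilon$ of an affine one, because a bounded strictly convex function on $\R$ does not exist --- is correct and is essentially the obstruction the paper has in mind. Your reduction for $\neg(3)$ via the lineality space $L=\{v:\ t\mapsto f(x+tv)\text{ is affine}\}$ is a clean and standard way to reach the normal form $f=c\circ P+\ell$, and it is equivalent to what the paper extracts from Lemma~\ref{reduction to Rk with k less than n} (where $L$ appears as $\mathrm{span}\{w_1,\dots,w_{n-k}\}$), so this part is an acceptable alternative route to the same place.

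The genuine gap is in $\neg(3)\Rightarrow\neg(2)$, after you have reduced to the case $L=\{0\}$. You need to produce, on each bounded set, a $C^\infty$ \emph{strongly} convex function $\le f$ that is uniformly close to $f$ there; Theorem~\ref{main theorem} then glues these. But you have no mechanism for producing such local approximants. Mollification of a merely convex $f$ is not strongly convex (take $f(x,y)=\max\{|x|,|y|\}$, which has $L=\{0\}$ but whose mollifications have flat directions on open cones), so ``the approximation-on-bounded-sets result already yields strongly convex approximants'' is simply false as stated. And your alternative of adding a ``localized strongly convex bump'' runs into exactly the problem you yourself flag: any perturbation $\tilde f+\delta\psi$ with $\psi$ strongly convex and uniformly bounded perturbation would need $\psi$ bounded, but strongly convex functions on $\R^n$ are never bounded; localizing $\psi$ with a cutoff destroys convexity (the partition-of-unity obstruction discussed in the paper's introduction). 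You gesture at ``the same gluing/transfer machinery while inserting at each stage a localized strongly convex bump,'' but $M_\varepsilon(\cdot,\cdot)$ preserves strong convexity only if \emph{both} inputs are already strongly convex on the relevant set (Proposition~\ref{properties of M(f,g)}(9)), so this is circular: you need strongly convex local pieces to start with.

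What is missing is the paper's key engine: the $(n+1)$-dimensional corner functions. When $L=\{0\}$ (equivalently $\neg(3)$), Lemma~\ref{reduction to Rk with k less than n} guarantees that $f$ is supported at every point by an $(n+1)$-dimensional corner $C=\max_j\{\ell_j+b_j\}$, and Lemma~\ref{strongly convex approximation of corners} shows that such full-dimensional corners \emph{can} be uniformly approximated on all of $\R^n$ by $C^\infty$ strongly convex functions (by an induction using a one-sided smooth maximum $\widetilde M_\varepsilon$ built from a heat-kernel mollified absolute value, which is strongly convex rather than flat far from the diagonal). On a bounded set one then covers $f-\varepsilon$ by finitely many such supporting corners, replaces each by a strongly convex approximant, and combines them with $M_{\varepsilon'}$; Proposition~\ref{properties of M(f,g)}(9) preserves strong convexity of the finite smooth max, and Theorem~\ref{main theorem} globalizes. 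Without this corner lemma (or some equivalent concrete construction), your $\neg(3)\Rightarrow\neg(2)$ step does not go through.
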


\section{The gluing technique}

In order to prove Theorem \ref{main theorem} we will use the following.

\begin{lemma}[Smooth maxima]\label{smooth maxima}
For every $\varepsilon>0$ there exists a $C^\infty$ function $M_{\varepsilon}:\R^{2}\to\R$ with the following properties:
\begin{enumerate}
\item $M_{\varepsilon}$ is convex;
\item $\max\{x, y\}\leq M_{\varepsilon}(x, y)\leq \max\{x,y\}+\frac{\varepsilon}{2}$ for all $(x,y)\in\R^2$.
\item $M_{\varepsilon}(x,y)=\max\{x,y\}$ whenever $|x-y|\geq\varepsilon$.
\item $M_{\varepsilon}(x,y)=M_{\varepsilon}(y,x)$.
\item $\textrm{Lip}(M_{\varepsilon})=1$ with respect to the norm $\|\cdot\|_{\infty}$ in $\R^2$.
\item $y-\varepsilon\leq x<x'\implies M_{\varepsilon}(x,y)<M_{\varepsilon}(x',y)$.
\item $x-\varepsilon\leq y<y'\implies M_{\varepsilon}(x,y)<M_{\varepsilon}(x,y')$.
\item $x\leq x', y\leq y' \implies M_{\varepsilon}(x,y)\leq M_{\varepsilon}(x', y')$, with a strict inequality in the case when both $x<x'$ and $y<y'$.
\end{enumerate}
\end{lemma}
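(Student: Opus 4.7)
The plan is to build $M_{\varepsilon}$ by convolving the $\max$ function with a suitable one-dimensional mollifier, exploiting the identity
\[
\max\{x,y\}=\frac{x+y}{2}+\frac{|x-y|}{2}.
\]
Only the non-smooth part $|x-y|$ needs to be regularised, so the construction reduces to producing a smooth convex approximation of $t\mapsto|t|$ in one variable. Concretely, I would fix an even, nonnegative $\phi\in C^{\infty}_{c}(\R)$ with support $[-1,1]$, with $\phi>0$ on $(-1,1)$ and $\int\phi=1$, set $\phi_{\eta}(s)=\eta^{-1}\phi(s/\eta)$ with $\eta=\varepsilon$, and define
\[
\theta_{\varepsilon}(t)=\int_{\R}|t-s|\,\phi_{\varepsilon}(s)\,ds,\qquad
M_{\varepsilon}(x,y)=\frac{x+y}{2}+\frac{1}{2}\theta_{\varepsilon}(x-y).
\]

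Then I would verify the properties in turn. Smoothness of $M_{\varepsilon}$ is immediate from smoothness of the mollifier. Convexity follows because $\theta_{\varepsilon}$ is the convolution of the convex function $|\cdot|$ with a nonnegative mollifier (hence convex), so $M_{\varepsilon}$ is the sum of a linear function and a convex function of the linear form $x-y$. For (2), Jensen's inequality together with evenness of $\phi_{\varepsilon}$ give $\theta_{\varepsilon}(t)\ge|t|$, while the pointwise bound $\bigl||t-s|-|t|\bigr|\le|s|\le\varepsilon$ gives $\theta_{\varepsilon}(t)\le|t|+\varepsilon$; halving and adding $(x+y)/2$ yields (2). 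Property (3) is the observation that if $|x-y|\ge\varepsilon$ then $t-s$ does not change sign on the support of $\phi_{\varepsilon}$, so $\theta_{\varepsilon}(x-y)=|x-y|$. Symmetry (4) is clear from symmetry of the definition in $x,y$ (using that $\theta_{\varepsilon}$ is even).

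For (5), I would compute
\[
\partial_{x}M_{\varepsilon}=\tfrac{1}{2}\bigl(1+\theta_{\varepsilon}'(x-y)\bigr),\qquad
\partial_{y}M_{\varepsilon}=\tfrac{1}{2}\bigl(1-\theta_{\varepsilon}'(x-y)\bigr),
\]
and note that $\theta_{\varepsilon}'(t)=2\Phi_{\varepsilon}(t)-1\in[-1,1]$, where $\Phi_{\varepsilon}$ is the cumulative distribution of $\phi_{\varepsilon}$. Hence both partials are nonnegative and $\partial_{x}M_{\varepsilon}+\partial_{y}M_{\varepsilon}=1$, which means $\|\nabla M_{\varepsilon}\|_{1}\equiv 1$; since the dual of $\|\cdot\|_{\infty}$ is $\|\cdot\|_{1}$, this gives $\mathrm{Lip}(M_{\varepsilon})=1$ with respect to $\|\cdot\|_{\infty}$. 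For (6) and (7), the same partials are used: positivity of $\phi$ on $(-1,1)$ forces $\Phi_{\varepsilon}(t)>0$ exactly for $t>-\varepsilon$, so $\partial_{x}M_{\varepsilon}>0$ whenever $x-y>-\varepsilon$; the assumption $x\ge y-\varepsilon$ and $x<x'$ then gives strict monotonicity by integrating the positive derivative on $(x,x']$ (at the boundary point $x=y-\varepsilon$ the derivative vanishes, but the derivative is strictly positive on the open part of the interval). Property (7) is symmetric. Finally, (8) follows from $\partial_{x}M_{\varepsilon},\partial_{y}M_{\varepsilon}\ge 0$ for monotonicity, and from the identity $\partial_{x}M_{\varepsilon}+\partial_{y}M_{\varepsilon}=1$ to obtain strictness when both coordinates increase (the directional derivative along the segment joining $(x,y)$ to $(x',y')$ is at least $\min\{x'-x,y'-y\}>0$).

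I do not anticipate a serious obstacle: the only point demanding care is matching the normalization of the Lipschitz constant to the $\|\cdot\|_{\infty}$-norm (so the target constant is exactly $1$ and not $2$), which is handled by the dual-norm computation above, and the boundary cases in (6)–(7), where the derivative vanishes on a null set and one must invoke the mean value theorem on the open interval where it is strictly positive.
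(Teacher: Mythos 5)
Your construction is correct and follows essentially the same route as the paper: both reduce the problem to a one-dimensional $C^\infty$ convex, even, $1$-Lipschitz regularisation $\theta$ of $|\cdot|$ that agrees with $|\cdot|$ exactly outside $(-\varepsilon,\varepsilon)$, and both set $M_\varepsilon(x,y)=\tfrac{1}{2}\bigl(x+y+\theta(x-y)\bigr)$. The only differences are cosmetic: you realise $\theta$ explicitly as a mollification of $|\cdot|$ (the paper merely asserts it is ``easy to construct''), and you verify property~(5) via the identity $\partial_x M_\varepsilon+\partial_y M_\varepsilon\equiv 1$ and $\ell^1/\ell^\infty$ duality rather than by the paper's direct algebraic estimate.
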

We will call $M_{\varepsilon}$ a smooth maximum.
\begin{proof}
It is easy to construct a $C^{\infty}$ function $\theta:\R\to (0,
\infty)$ such that:
\begin{enumerate}
\item $\theta(t)=|t|$ if and only if $|t|\geq\varepsilon$;
\item $\theta$ is convex and symmetric;
\item $\textrm{Lip}(\theta)=1$.
\end{enumerate}
Then it is also easy to check that the function $M_{\varepsilon}$ defined by
$$
M_{\varepsilon}(x,y)=\frac{x+y+\theta(x-y)}{2}
$$
satisfies the required properties. For instance, let us check
properties $(5), (6), (7)$ and $(8)$, which are perhaps less
obvious than the others. Since $\theta$ is $1$-Lipschitz we have
\begin{eqnarray*}
    & & M_{\varepsilon}(x,y)-M_{\varepsilon}(x', y')=\frac{x-x' +y-y'+\theta(x-y)-\theta(x'-y')}{2}\leq \\
    & &\frac{(x-x)' +(y-y')+|x-x'-y+y'|}{2}=\\
    & &\max\{x-x', y-y'\}\leq \max\{|x-x'|, |y-y'|\},
\end{eqnarray*}
which establishes $(5)$. To verify $(6)$ and $(7)$, note that our function $\theta$ must satisfy $|\theta'(t)|<1 \iff  |t|<\varepsilon$. Then we have
$$
\frac{\partial M_{\varepsilon}}{\partial x }(x,y)=\frac{1}{2}\left( 1+\theta'(x-y)\right)\geq \frac{1}{2}\left( 1-|\theta'(x-y)|\right)>0 \textrm{ whenever } |x-y|<\varepsilon,
$$
while
$$
\frac{\partial M_{\varepsilon}}{\partial x }(x,y)=\frac{1}{2}\left( 1+\theta'(x-y)\right)= \left\{
                                                    \begin{array}{ll}
                                                      1, & \textrm{ if } x\geq y+\varepsilon, \\
                                                      0, & \textrm{ if } y\geq x+\varepsilon.
                                                    \end{array}
                                                  \right.
$$
This implies $(6)$ and, together with $(4)$, also $(7)$ and the first part of $(8)$. Finally, if for instance we have $x'>x=\max\{x,y\}$ then $M_{\varepsilon}(x,y)<M_{\varepsilon}(x',y)$ by $(6)$, and if in addition $y'>y$ then $M_{\varepsilon}(x',y)\leq M_{\varepsilon}(x',y')$ by the first part of $(8)$, hence $M_{\varepsilon}(x,y)<M_{\varepsilon}(x',y')$. This shows the second part of $(8)$.
\end{proof}

The smooth maxima $M_{\varepsilon}$ are useful to approximate the
maximum of two functions without losing convexity or other key
properties of the functions, as we next see.

\begin{proposition}\label{properties of M(f,g)}
Let $U\subseteq X$ be as in the statement of Theorem \ref{main theorem},
$M_{\varepsilon}$ as in the preceding Lemma, and let $f, g: U\to\R$
be convex functions. For every $\varepsilon>0$, the function
$M_{\varepsilon}(f,g):U\to\R$ has the following properties:
\begin{enumerate}
\item $M_{\varepsilon}(f,g)$ is convex.
\item If $f$ is $C^k$ on $\{x: f(x)\geq g(x)-\varepsilon\}$ and $g$ is $C^k$ on $\{x: g(x)\geq f(x)-\varepsilon\}$ then $M_{\varepsilon}(f,g)$ is $C^k$ on $U$. In particular, if $f, g$ are $C^k$, then so is $M_{\varepsilon}(f,g)$.
\item $M_{\varepsilon}(f,g)=f$ if $f\geq g+\varepsilon$.
\item $M_{\varepsilon}(f,g)=g$ if $g\geq f+\varepsilon$.
\item $\max\{f,g\}\leq M_{\varepsilon}(f,g)\leq \max\{f,g\} + \varepsilon/2$.
\item $M_{\varepsilon}(f,g)=M_{\varepsilon}(g, f)$.
\item $\textrm{Lip}(M_{\varepsilon}(f,g)_{|_B})\leq \max\{ \textrm{Lip}(f_{|_B}), \textrm{Lip}(g_{|_B}) \}$ for every ball $B\subset U$ (in particular $M_{\varepsilon}(f,g)$ preserves common local Lipschitz constants of $f$ and $g$).
\item If $f, g$ are strictly convex on a set $B\subseteq U$, then so is $M_{\varepsilon}(f,g)$.
\item If $f, g\in C^2(X)$ are strongly convex on a set $B\subseteq U$, then so is $M_{\varepsilon}(f,g)$.
\item If $f_1\leq f_2$ and $g_1\leq g_2$ then $M_{\varepsilon}(f_1, g_1)\leq M_{\varepsilon}(f_2, g_2)$.
\end{enumerate}
\end{proposition}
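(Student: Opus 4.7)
The plan is to derive each property from a matching feature of the two-dimensional smooth maximum $M_{\varepsilon}$ established in Lemma \ref{smooth maxima}, together with elementary composition arguments. The underlying reason this works is that $M_{\varepsilon}$ is simultaneously convex and coordinatewise nondecreasing, so composing it with a pair of convex arguments preserves convexity, strict inequalities, monotonicity, and Lipschitz bounds.

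Items (1), (3)--(7), and (10) are essentially direct readouts. For convexity in (1), along a segment $z_{t}:=tx+(1-t)y$ I first apply coordinatewise monotonicity (lemma item (8)) to pass from $(f(z_{t}),g(z_{t}))$ to $(tf(x)+(1-t)f(y),\,tg(x)+(1-t)g(y))$, and then use convexity of $M_{\varepsilon}$ on $\R^{2}$ to bound this by $tM_{\varepsilon}(f(x),g(x))+(1-t)M_{\varepsilon}(f(y),g(y))$. Items (3) and (4) follow immediately from item (3) of the lemma; (5) is item (2) applied pointwise; (6) is item (4); (7) is a direct consequence of $\textrm{Lip}_{\|\cdot\|_{\infty}}(M_{\varepsilon})=1$ via $|M_{\varepsilon}(f,g)(x)-M_{\varepsilon}(f,g)(y)|\leq\max\{|f(x)-f(y)|,|g(x)-g(y)|\}$; and (10) is item (8).

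For item (2) I argue locally at a point $x_{0}\in U$. If $f(x_{0})>g(x_{0})+\varepsilon$, continuity preserves the inequality in a neighborhood, and there $M_{\varepsilon}(f,g)=f$ by item (3) of the lemma; since that neighborhood lies in $\{f\geq g-\varepsilon\}$, $f$ is $C^{k}$ on it, so $M_{\varepsilon}(f,g)$ is $C^{k}$ near $x_{0}$. The case $g(x_{0})>f(x_{0})+\varepsilon$ is symmetric. If $|f(x_{0})-g(x_{0})|\leq\varepsilon$, then $x_{0}$ lies in both $\{f\geq g-\varepsilon\}$ and $\{g\geq f-\varepsilon\}$, so both $f$ and $g$ are $C^{k}$ in a neighborhood and $M_{\varepsilon}\circ(f,g)$ is $C^{k}$ as a composition. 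Item (8) refines the chain used in (1): if $f,g$ are strictly convex on $B$ and $x\neq y$ in $B$, the inequalities $f(z_{t})<tf(x)+(1-t)f(y)$ and $g(z_{t})<tg(x)+(1-t)g(y)$ are strict, and the strict-monotonicity clause in item (8) of the lemma turns the composed inequality into a strict one.

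The only genuinely analytic step, and the main obstacle, is item (9). Writing $\varphi=M_{\varepsilon}\circ(f,g)$, the chain rule gives $D^{2}\varphi=\partial_{1}M_{\varepsilon}\,D^{2}f+\partial_{2}M_{\varepsilon}\,D^{2}g+Q$, where $Q$ is the pullback of $D^{2}M_{\varepsilon}$ by $x\mapsto(Df(x),Dg(x))$ and is positive semidefinite because $M_{\varepsilon}$ is convex. From the explicit formula $M_{\varepsilon}(x,y)=(x+y+\theta(x-y))/2$ supplied in the proof of Lemma \ref{smooth maxima} one reads off $\partial_{1}M_{\varepsilon},\partial_{2}M_{\varepsilon}\geq 0$ and $\partial_{1}M_{\varepsilon}+\partial_{2}M_{\varepsilon}\equiv 1$; hence $\partial_{1}M_{\varepsilon}\,D^{2}f+\partial_{2}M_{\varepsilon}\,D^{2}g$ is a convex combination (with weights summing to $1$) of the positive definite forms $D^{2}f$ and $D^{2}g$, so is itself positive definite, and adding the PSD form $Q$ leaves $D^{2}\varphi$ positive definite. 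On a Riemannian manifold the same identity holds with $D^{2}$ replaced by the covariant Hessian along an arbitrary geodesic, so the argument equally delivers strong convexity in the Greene--Wu sense required by Theorem \ref{main theorem}.
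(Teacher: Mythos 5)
Your argument is correct and follows essentially the same route as the paper: items (1)--(8) and (10) rest on exactly the same combination of coordinatewise monotonicity and convexity of $M_\varepsilon$, and item (9) reduces to positivity of the second derivative of the composition along a line or geodesic. Your Hessian decomposition $D^2\varphi=\partial_1 M_\varepsilon\,D^2 f+\partial_2 M_\varepsilon\,D^2 g+Q$ with nonnegative weights summing to one and $Q$ positive semidefinite is a conceptual repackaging of the paper's explicit one-dimensional computation based on $M_\varepsilon(x,y)=\tfrac12\bigl(x+y+\theta(x-y)\bigr)$; both hinge on $|\theta'|\leq 1$ and $\theta''\geq 0$.
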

\begin{proof}
Properties $(2), (3), (4), (5), (6), (7)$ and $(10)$ are obvious
from the preceding lemma. To check $(1)$ and $(8)$, we simply use
$(10)$ and convexity of $f,g$ and $M_{\varepsilon}$ to see that,
for $x, y\in U$, $t\in [0,1]$,
\begin{eqnarray*}
& &
M_{\varepsilon}\left(f(tx+(1-t)y), g(tx+(1-t)y)\right)\leq \\
& &M_{\varepsilon}\left(tf(x)+(1-t)f(y), tg(x)+(1-t)g(y)\right)=\\
& &
M_{\varepsilon}\left(t(f(x), g(x))+(1-t)(f(y), g(y))\right)\leq \\
& & t M_{\varepsilon}(f(x), g(x))+(1-t)M_{\varepsilon}(f(y), g(y)),
\end{eqnarray*}
and, according to $(8)$ in the preceding lemma, the first
inequality is strict whenever $f$, $g$ are strictly convex and
$0<t<1$. To check $(9)$, it is sufficient to see that the function
$t\mapsto M_{\varepsilon}(f,g)(\gamma(t))$ has a strictly positive
second derivative at each $t$, where $\gamma(t)=x+tv$ with $v\neq 0$ (or, in the
Riemannian case, $\gamma$ is a nonconstant geodesic). So, by replacing $f, g$
with $f(\gamma(t))$ and $g(\gamma(t))$ we can assume that $f$ and
$g$ are defined on an interval $I\subseteq\R$ on which we have
$f''(t)>0, g''(t)>0$. But in this case we easily compute
\begin{eqnarray*}
& &\frac{d^2}{dt^2}M_{\varepsilon}(f(t), g(t))=\\
& &\frac{ \left(1+\theta'(f(t)-g(t))\right) f''(t) +
\left(1-\theta'(f(t)-g(t))\right) g''(t)}{2} +\\
& & + \frac{\theta''(f(t)-g(t)) \left( f(t)-g(t)\right)^{2}}{2}\geq\\
& &\geq \frac{1}{2}\min\{f''(t), g''(t)\}>0,
\end{eqnarray*}
because $|\theta'|\leq 1$ and $\theta''\geq 0$.
\end{proof}

\medskip

\begin{center}
{\bf Proof of Theorem \ref{main theorem}.}
\end{center}

Given a continuous convex function $f:U\to\R$ and $\varepsilon>0$,
we start defining $f_1=f$ and use the assumption that
$f_1-\varepsilon/2$ can be approximated from below by $C^k$ convex
functions, to find a $C^k$ convex function $h_1:U\to\R$ such that
$$ f_1 -\varepsilon\leq h_1 \textrm{
on } B_1, \textrm{ and }   h_1 \leq f_1-\frac{\varepsilon}{2} \textrm{
on } U.$$ We put $g_1=h_1$. Now define $f_2=f_1-\varepsilon$ and
find a convex function $h_2\in C^k(U)$ such that
$$f_2-\frac{\varepsilon}{2}\leq h_2 \textrm{ on } B_2, \textrm{ and } h_2 \leq f_2-\frac{\varepsilon}{4}
\textrm{ on } U.$$ Set
$$g_2=M_{\frac{\varepsilon}{10^2}}(g_1, h_2).$$ By the preceding
proposition we know that $g_2$ is a convex $C^k$ function
satisfying $$\max\{g_1, h_2\}\leq g_2\leq \max\{g_1,
h_2\}+\frac{\varepsilon}{10^{2}} \textrm{ on } U,$$ and
$$
g_{2}(x)=\max\{g_1(x), h_{2}(x)\} \textrm{ whenever } |h_{1}(x)-h_2(x)|\geq \frac{\varepsilon}{10^2}.
$$
\begin{claim}
We have
$$
g_2=g_1 \, \textrm{ on } \, B_1, \,\,\, \textrm{ and } \,\,\,
f-\varepsilon-\frac{\varepsilon}{2}\leq g_2\leq f-\frac{\varepsilon}{2}+\frac{\varepsilon}{10^2} \, \textrm{ on } \, B_2.
$$
\end{claim}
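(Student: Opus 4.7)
Write $\delta=\varepsilon/10^{2}$. The idea is to read off each part of the claim directly from Proposition \ref{properties of M(f,g)} applied to $M_{\delta}(g_{1},h_{2})$, using only the defining bounds built into $h_{1}$ and $h_{2}$. The key quantitative observation is that $\delta\ll\varepsilon/4$, where $\varepsilon/4$ is precisely the gap between the local lower estimate of $h_{1}$ on $B_{1}$ and the global upper estimate of $h_{2}$ on $U$. This separation makes $M_{\delta}$ act as the pure first-coordinate projection on $B_{1}$, while still allowing a controlled smoothing to take effect on the transition region.

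To establish $g_{2}=g_{1}$ on $B_{1}$, I would invoke part (3) of Proposition \ref{properties of M(f,g)} (with $\varepsilon$ there replaced by $\delta$): $M_{\delta}(u,v)=u$ whenever $u\geq v+\delta$. The choice of $h_{1}$ yields $g_{1}=h_{1}\geq f-\varepsilon=f_{2}$ on $B_{1}$, while the choice of $h_{2}$ yields $h_{2}\leq f_{2}-\varepsilon/4$ on all of $U$. Subtracting on $B_{1}$ gives $g_{1}-h_{2}\geq \varepsilon/4>\delta$, so property (3) applies and $g_{2}=g_{1}$ on $B_{1}$.

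For the two-sided bound on $B_{2}$ I would use part (5) of the same proposition. Since $g_{1}=h_{1}\leq f-\varepsilon/2$ on $U$ and $h_{2}\leq f-5\varepsilon/4\leq f-\varepsilon/2$ on $U$, the upper half of (5) gives
\[
g_{2}\leq \max\{g_{1},h_{2}\}+\tfrac{\delta}{2}\leq f-\tfrac{\varepsilon}{2}+\tfrac{\varepsilon}{10^{2}},
\]
which in particular holds on $B_{2}$. For the lower half, the other side of (5) gives $g_{2}\geq \max\{g_{1},h_{2}\}\geq h_{2}$, and combining this with the defining inequality $h_{2}\geq f_{2}-\varepsilon/2=f-3\varepsilon/2$ on $B_{2}$ finishes that side. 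No essential obstacle arises: once Proposition \ref{properties of M(f,g)} is in hand, the whole argument is pure bookkeeping. The only point really worth noting is that the tolerance $\delta$ must be chosen small enough relative to $\varepsilon/4$ so that $M_{\delta}$ leaves $g_{1}$ untouched on $B_{1}$; this exact preservation is what will allow the analogous inductive step (defining $g_{n+1}=M_{\varepsilon/10^{n+1}}(g_{n},h_{n+1})$) to keep the earlier approximations frozen inside $B_{n}$ and produce a well-defined global limit.
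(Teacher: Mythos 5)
Your proposal is correct and follows the same route as the paper: you use property (3) of Proposition \ref{properties of M(f,g)} together with the gap $g_1 - h_2 \geq \varepsilon/4 > \varepsilon/10^2$ on $B_1$ to freeze $g_2 = g_1$ there, and property (5) together with the defining bounds on $h_1, h_2$ for the two-sided estimate on $B_2$. The only cosmetic difference is that the paper splits $B_2$ into $B_1$ and $B_2 \setminus B_1$ and records the tighter bound $f-\varepsilon \leq g_2 \leq f - \varepsilon/2$ on $B_1$ as a byproduct, whereas you establish the weaker stated bound directly on all of $B_2$ at once, which is a slightly cleaner piece of bookkeeping and loses nothing.
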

\noindent Indeed, if $x\in B_1$, $$g_1(x)\geq f_1(x)-\varepsilon=f_{2}(x)-\frac{\varepsilon}{4}+\frac{\varepsilon}{4}\geq h_2(x)+\frac{\varepsilon}{4}\geq h_2(x)+\frac{\varepsilon}{10^2},
$$
hence $g_2(x)=g_1(x)$, and in particular $f(x)-
\frac{\varepsilon}{2}\geq g_{2}(x)\geq f(x)-\varepsilon$. While,
if $x\in B_2\setminus B_1$ then
\begin{eqnarray*}
& &
f(x)-\varepsilon-\frac{\varepsilon}{2}\leq \max\{g_1(x), h_2(x)\}\leq g_{2}(x)\leq
\max\{g_1(x), h_2(x)\}+\frac{\varepsilon}{10^{2}}\leq\\
& & \max\{f(x)-\frac{\varepsilon}{2},
f(x)-\varepsilon-\frac{\varepsilon}{4}\}+
\frac{\varepsilon}{10^2}=f(x)-\frac{\varepsilon}{2}+\frac{\varepsilon}{10^2}.
\end{eqnarray*}
This proves the claim.

Next, define $f_3=f_2-\varepsilon/2=f-\varepsilon-\varepsilon/2$,
find a convex $C^k$ function $h_3$ on $U$ so that
$$
f_3-\frac{\varepsilon}{2^2}\leq h_3 \textrm{ on } B_3, \textrm{ and } h_3 \leq f_{3}-\frac{\varepsilon}{2^3} \, \textrm{ on } U,
$$
and set $$g_3=M_{\frac{\varepsilon}{10^3}}(g_2, h_3).
$$
\begin{claim} We have
$$
g_3=g_2 \, \textrm{ on } \, B_2, \,\,\, \textrm{ and } \,\,\,
f-\varepsilon-\frac{\varepsilon}{2}-\frac{\varepsilon}{2^{2}}\leq g_3
\leq f-\frac{\varepsilon}{2}+\frac{\varepsilon}{10^2}+\frac{\varepsilon}{10^3} \, \textrm{ on } \, B_3.
$$
\end{claim}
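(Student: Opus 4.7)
The proof of this second claim is a straightforward adaptation of the verification of the first claim, exploiting the gap between the approximating function $g_2$ and the newly chosen $h_3$.

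First, to see that $g_3 = g_2$ on $B_2$, I would verify the hypothesis of property (3) from Proposition \ref{properties of M(f,g)} (namely $g_2 \geq h_3 + \varepsilon/10^3$) on $B_2$. From the previous claim, $g_2 \geq f - \varepsilon - \varepsilon/2$ on $B_2$, while globally $h_3 \leq f_3 - \varepsilon/2^3 = f - \varepsilon - \varepsilon/2 - \varepsilon/8$ on $U$. Subtracting yields $g_2 - h_3 \geq \varepsilon/8 > \varepsilon/10^3$, so the smooth max reduces to $g_2$ on $B_2$ as required.

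Next, for the estimates on $B_3$, I would first obtain the global upper bound $g_2 \leq f - \varepsilon/2 + \varepsilon/10^2$ on all of $U$. This follows because $g_2 \leq \max\{g_1, h_2\} + \varepsilon/10^2$ by property (5) of $M_{\varepsilon}$, and both $g_1 \leq f - \varepsilon/2$ and $h_2 \leq f - \varepsilon - \varepsilon/4$ hold globally. On $B_3 \setminus B_2$, combining this with $h_3 \leq f - \varepsilon - \varepsilon/2 - \varepsilon/8$ gives $\max\{g_2,h_3\} \leq f - \varepsilon/2 + \varepsilon/10^2$, and adding the $\varepsilon/10^3$ slack from property (5) yields the claimed upper bound. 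On $B_2$ the upper bound is even easier since $g_3 = g_2$ there. For the lower bound on $B_3$, I use $g_3 \geq \max\{g_2, h_3\} \geq h_3$ together with the one-sided estimate $h_3 \geq f_3 - \varepsilon/2^2 = f - \varepsilon - \varepsilon/2 - \varepsilon/4$ valid on $B_3$.

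The pattern is now clear and extends inductively: at step $n$, set $f_n = f - \varepsilon\sum_{j=0}^{n-1} 2^{-j}$, pick $h_n$ with $f_n - \varepsilon/2^{n-1} \leq h_n$ on $B_n$ and $h_n \leq f_n - \varepsilon/2^n$ on $U$, and define $g_n = M_{\varepsilon/10^n}(g_{n-1}, h_n)$. The same argument shows $g_n = g_{n-1}$ on $B_{n-1}$ and propagates the two-sided estimate, with the lower bound controlled by a partial sum of $\varepsilon/2^j$ (bounded by $2\varepsilon$) and the upper error controlled by $\sum 1/10^j$ (bounded by $\varepsilon/9$). The only real subtlety, and what one must carefully check at the inductive step, is that the gap between the running lower bound on $g_{n-1}$ and the global upper bound on $h_n$ remains larger than the tolerance $\varepsilon/10^n$ of the smooth maximum, so that property (3) of $M_{\varepsilon}$ kicks in and the construction is genuinely locally stable; this is the essential mechanism that allows the $g_n$ to stabilize on compacta and glue into a global $C^k$ convex function $g \leq f$ with $f - C\varepsilon \leq g$ throughout $U$.
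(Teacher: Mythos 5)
Your proof is correct and follows the same approach the paper intends (the paper just writes "this is easily checked as before"): you verify the gap $g_2 - h_3 \geq \varepsilon/8 > \varepsilon/10^3$ on $B_2$ to force the smooth maximum to reduce to $g_2$ there, and you chain the global upper bound on $g_2$ with the global upper bound on $h_3$ (plus the $\varepsilon/10^3$ slack) and the lower bound $g_3 \geq h_3 \geq f_3 - \varepsilon/4$ on $B_3$. The only slip is cosmetic and outside the claim itself: in your inductive paragraph you write $f_n = f - \varepsilon\sum_{j=0}^{n-1}2^{-j}$, whereas the paper's recursion $f_n = f_{n-1} - \varepsilon/2^{n-2}$ gives $f_n = f - \varepsilon\sum_{j=0}^{n-2}2^{-j}$ (your explicit $f_3 = f - \varepsilon - \varepsilon/2$ is right, so it is just an off-by-one in the summation bound).
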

\noindent This is easily checked as before.

In this fashion we can inductively define a sequence of $C^k$ convex functions $g_n$ on $U$ such that
$$
g_n=g_{n-1} \, \textrm{ on } \, B_{n-1}, \,\,\, \textrm{ and } \,\,\,
$$
$$
f-\varepsilon-\frac{\varepsilon}{2}-\frac{\varepsilon}{2^{2}}-...-\frac{\varepsilon}{2^{n-1}}\leq g_n
\leq f-\frac{\varepsilon}{2}+\frac{\varepsilon}{10^2}+\frac{\varepsilon}{10^3}+...+\frac{\varepsilon}{10^n} \, \textrm{ on } \, B_n
$$
(at each step of the inductive process we define $f_{n}=f_{n-1}-\varepsilon/2^{n-2}=f-\varepsilon-...-\varepsilon/2^{n-2}$, we find $h_{n}$ convex and $C^k$ such that $f_{n}-\varepsilon/2^{n-1}\leq h_n$ on $B_n$ and $h_n\leq f_n-\varepsilon/2^n$ on $U$, and we put $g_n=M_{\varepsilon/10^n}(g_{n-1}, h_n)$).

Having constructed a sequence $g_n$ with such properties, we
finally define $$g(x)=\lim_{n\to\infty}g_n(x).$$ Since we have
$g_{n+k}=g_{n}$ on $B_n$ for all $k\geq 1$, it is clear that
$g=g_n$ on each $B_n$, which implies that $g$ is $C^k$ and convex on $U$
(or even strongly convex when the $g_n$ are strongly convex).
Besides, for every $x\in U=\bigcup_{n=1}^{\infty}B_n$ we have
$$
f(x)-2\varepsilon=f(x)-\sum_{n=1}^{\infty}\frac{\varepsilon}{2^{n-1}}\leq g(x)\leq f(x)-\frac{\varepsilon}{2}+\sum_{n=2}^{\infty}\frac{\varepsilon}{10^n},
$$
hence $f-2\varepsilon\leq g\leq f$. \, \, \, $\Box$

\begin{remark}\label{the method preserves strong convexity etc}
{\em From the above proof and from Proposition \ref{properties of
M(f,g)} it is clear that this method of transferring convex
approximations on bounded sets to global convex approximations
preserves strict and strong convexity, local Lipschitzness,
minimizers and order, whenever the given approximations on bounded
sets have these properties.}
\end{remark}

\section{Why approximating Lipschitz convex functions is enough}

We will deduce our corollaries by combining Theorem \ref{main theorem} with the known results on approximation of convex functions on bounded sets mentioned in the introduction, and with the following.

\begin{proposition}\label{uniform approximation of Lipschitz functions is enough}
Let $X$ be $\R^n$, or a Cartan-Hadamard manifold
(not necessarily finite-dimensional), or a Banach space, and let $U\subseteq X$ be open and convex.
Assume that $U$ has the property that every Lipschitz convex function on $U$ can be approximated
by $C^k$ convex (resp. strongly convex) functions, uniformly on $U$.

Then every convex function $f:U\to\R$
which is bounded on bounded subsets $B$ of $U$ with $\textrm{dist}(B, \partial U)>0$ can be approximated from below by $C^k$ convex
(resp. strongly convex) functions, uniformly on bounded subsets of $U$.
\end{proposition}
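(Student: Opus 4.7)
My plan is to deduce the conclusion from the hypothesis by replacing the given $f$ on the target bounded set by a globally Lipschitz convex minorant. Fix a bounded $B\subset U$ with $\textrm{dist}(B,\partial U)>0$ and $\varepsilon>0$; I aim to construct a convex Lipschitz function $\tilde f:U\to\R$ with $\tilde f\leq f$ on $U$ and $\tilde f=f$ on $B$. Once such $\tilde f$ is in hand, the hypothesis applied to $\tilde f$ yields a $C^k$ convex (or strongly convex) $g_0:U\to\R$ with $|g_0-\tilde f|\leq\varepsilon/2$ uniformly on $U$, and then $g:=g_0-\varepsilon/2$ is still $C^k$ convex (subtracting a constant preserves strong convexity), satisfies $g\leq\tilde f\leq f$ throughout $U$, and obeys $g\geq\tilde f-\varepsilon=f-\varepsilon$ on $B$, which is the required approximation from below uniform on $B$.

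To build $\tilde f$, thicken $B$ to a bounded open convex set $B^*$ with $\overline{B}\subset B^*\subset \overline{B^*}\subset U$ still at positive distance from $\partial U$. Since $f$ is convex and bounded on a neighborhood of $\overline{B^*}$, the classical local Lipschitz estimate for convex functions yields some $L<\infty$ with $\textrm{Lip}(f|_{B^*})\leq L$. I then produce, for each $y\in B^*$, a convex $L$-Lipschitz function $\ell_y:U\to\R$ satisfying $\ell_y\leq f$ on $U$ and $\ell_y(y)=f(y)$, and set
$$\tilde f(x)=\sup_{y\in B^*}\ell_y(x).$$
This is the supremum of an equi-$L$-Lipschitz family of convex functions, hence convex and $L$-Lipschitz on $U$; the bound $\ell_y\leq f$ gives $\tilde f\leq f$ everywhere; and for $x\in B^*\supset B$ the member $\ell_x$ yields $\tilde f(x)\geq \ell_x(x)=f(x)$, whence $\tilde f=f$ on $B$.

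The existence of the support functions $\ell_y$ is the essential input. In the Banach-space case (which includes $\R^n$) they are taken affine: Hahn-Banach together with the local Lipschitz bound provides a subgradient $\xi\in\partial f(y)$ of dual norm at most $L$, and $\ell_y(\cdot):=f(y)+\xi(\cdot-y)$ is affine, $L$-Lipschitz, lies below $f$ by convexity, and touches $f$ at $y$. In the Cartan-Hadamard setting the analogs are geodesic support functions, built from the facts that on such a manifold the Riemannian distance from any point is globally convex and $1$-Lipschitz, that the exponential map is a diffeomorphism onto each tangent space, and that subdifferentials of continuous convex functions are nonempty at interior points.

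The main obstacle is precisely the Cartan-Hadamard case: without a linear structure Hahn-Banach is unavailable, so the convex $L$-Lipschitz global minorants $\ell_y$ must be constructed by hand from the nonpositive-curvature geometry. The Banach case is essentially immediate once $L$ is identified, and with the family $\{\ell_y\}_{y\in B^*}$ in place the remainder of the argument is purely formal.
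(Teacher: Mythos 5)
Your overall strategy is the same as the paper's: replace $f$ by a globally Lipschitz convex minorant $\tilde f$ that coincides with $f$ on $B$, then invoke the hypothesis on $\tilde f$ and shift down by the approximation error. In the Banach (and $\R^n$) case your construction is correct and essentially equivalent to the paper's, just phrased dually: you take $\tilde f=\sup_{y\in B^*}\ell_y$ over affine $L$-Lipschitz minorants of $f$, whereas the paper defines $g(x)=\inf_{y\in U}\{f(y)+L\,d(x,y)\}$ (the Pasch--Hausdorff envelope). For a convex $f$ on a normed space these two envelopes play the same role and the argument goes through either way.

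The genuine gap is in the Cartan--Hadamard case, and it is more serious than your final paragraph suggests. You need, for each $y\in B^*$, a function $\ell_y:U\to\R$ that is simultaneously \emph{convex}, $L$-Lipschitz, touching $f$ at $y$, and a \emph{global minorant} of $f$ on $U$. The natural candidates all fail. The ``affine in normal coordinates'' function $x\mapsto f(y)+\langle\zeta,\exp_y^{-1}x\rangle_y$ with $\zeta\in D^-f(y)$ does lie below $f$ by the geodesic subgradient inequality, but it is \emph{concave}, not convex, on a Cartan--Hadamard manifold (its negative is a Busemann-type function), so a supremum of such functions is not convex. Conversely, the convex candidate $x\mapsto f(y)+\|\zeta\|\,b_\gamma(x)$ built from the Busemann function $b_\gamma$ of the ray issuing from $y$ in direction $-\zeta/\|\zeta\|$ is convex and $L$-Lipschitz with the right value and derivative at $y$, but the estimates from convexity of $f$ and of $b_\gamma$ both run in the same direction and do not yield $\ell_y\le f$ away from $y$. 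Without a linear structure there is no family of globally \emph{affine} (i.e.\ simultaneously convex and concave) supports, which is precisely what your construction relies on. The paper avoids this obstruction entirely: it works with the infimum formula directly and derives convexity of $g$ from two structural facts --- that on a Cartan--Hadamard manifold the distance $d:X\times X\to[0,\infty)$ is \emph{jointly} convex, and that the partial infimum $x\mapsto\inf_{y\in U}F(x,y)$ of a jointly convex $F$ is convex (see \cite[Lemma 3.1]{AF2} and \cite[V.4.3]{Sakai}). Equality $g=f$ on $B$ is then obtained from the subdifferential estimate $\|\zeta\|_x\le L$ together with convexity of $t\mapsto f(\exp_x(tv))$. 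I recommend you drop the pointwise-support construction in the Riemannian case and use the inf-convolution argument instead; the rest of your reduction (subtracting $\varepsilon/2$ from the approximant, which preserves $C^k$ and strong convexity) is fine.
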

\begin{proof}
It is well known that a convex function
$f:U\to\R$ which is bounded on bounded subsets $B$ of $U$ with $\textrm{dist}(B, \partial U)>0$ is
also Lipschitz on each such subset $B$ of $X$.
So let $B\subset U$ be bounded, open and convex with $\textrm{dist}(B, \partial U)>0$, put
$L=\textrm{Lip}(f_{|_B})$, and define
$$
g(x)=\inf\{f(y)+L\, d(x,y) :y\in U\},
$$
where $d(x,y)=\|x-y\|$ in the case when $X$ is $\R^n$ or a Banach space, and $d$ is the Riemannian distance in $X$ when $X$ is a Cartan-Hadamard manifold.
\begin{claim}
The function $g$ has the following properties:
\begin{enumerate}
\item $g$ is convex on $X$.
\item $g$ is $L$-Lipschitz on $X$.
\item $g=f$ on $B$.
\item $g\leq f$ on $U$.
\end{enumerate}
\end{claim}
These are well known facts in the vector space case, but perhaps
not so in the Riemannian setting, so let us say a few words about
the proof. Property $(4)$ is obvious. To see that the reverse
inequality holds on $B$, take $x\in B$ and a subdifferential
$\zeta\in D^{-}f(x)$ (we refer to \cite{AFLM, AF2} for the
definitions and some properties of the Fr{\'e}chet subdifferential and
inf convolution on Riemannian manifolds). We have $\|\zeta\|_x\leq
L$ because $f$ is $L$-Lipschitz on $B$. Since $\exp_{x}:TX_{x}\to
X$ is a diffeomorphism, for every $y\in X$ there exists $v_y\in
TX_x$ such that $\exp_{x}(v_y)=y$. And, because $t\mapsto f(\exp_x(tv_y))$ is
convex, we have $f(\exp_{x}(tv_y))-f(x)\geq \langle
\zeta, tv_y\rangle_x$ for every $t$, and in particular, taking
$t=1$, we get $f(y)-f(x)\geq \langle \zeta, tv_y\rangle_x\geq
-\|\zeta\|_x \|v_y\|_x\geq -L d(x,y)$. Hence $f(y)+ L d(x,y)\geq
f(x)$ for all $y\in X$, and taking the inf we get $g(x)\geq f(x)$.
Therefore $g=f$ on $B$. Showing $(2)$ is easy (as a matter of fact
this is true in every metric space). Finally, to see that $g$ is
convex on $X$, one does have to use that $X$ is a Cartan-Hadamard
manifold. We
note that in a Cartan-Hadamard manifold $X$ the distance function
$d:X\times X\to [0, \infty)$ is globally convex (see for instance \cite[V.4.3]{Sakai} and
\cite[Corollary 4.2]{AF2}), and that if $X\times U\ni(x,y)\mapsto
F(x,y)$ is convex then $x\mapsto \inf_{y\in U}F(x,y)$ is also
convex on $X$ (see \cite[Lemma 3.1]{AF2}). Since $(x,y)\mapsto f(y)+L
d(x,y)$ is convex on $X\times U$, this shows $(1)$.

\medskip

Now, for a given $\varepsilon>0$, by assumption there exists a $C^k$ convex
(resp. strongly convex) function $\varphi:U\to\R$ so that $g-\varepsilon\leq\varphi\leq g$ on $U$.
Since $g\leq f$ on $U$, and $g=f$ on $B$, this implies that $\varphi\leq f$ on $U$,
and $f-\varepsilon\leq\varphi$ on $B$.
\end{proof}

\bigskip

\section{Uniform approximation of convex functions on $\R^n$}

\medskip

Let $f:\R^n\to\R$ be continuous. As we recalled in the
introduction, if $\delta:\R^n\to [0, \infty)$ is a $C^\infty$
function such that $\delta(x)=0$ whenever $\|x\|\geq 1$, and
$\int_{\R^n}\delta=1$, then the functions
$f_{\varepsilon}(x)=\int_{\R^n}f(x-y)\delta_{\varepsilon}(y)dy$
(where
$\delta_{\varepsilon}(x)=\varepsilon^{-n}\delta(x/\varepsilon)$)
are $C^\infty$ and converge to $f(x)$ uniformly on every compact
set, as $\varepsilon\searrow 0$. Moreover, as is well known and
easily checked:
\begin{enumerate}
\item If $f$ is uniformly continuous then $f_{\varepsilon}$ converges to $f$
uniformly on $\R^n$.
\item If $f$ is convex (resp. strictly, or strongly convex), so is $f_{\varepsilon}$.
\item If $f$ is Lipschitz, so is $f_{\varepsilon}$, and $\textrm{Lip}(f_{\varepsilon})=\textrm{Lip}(f)$.
\item If $f$ is locally Lipschitz, $\textrm{Lip}(f_{{\varepsilon}_{|_B}})=\textrm{Lip}(f_{|_{(1+\varepsilon)B}})$ for every ball $B$.
\item If $f\leq g$ then $f_{\varepsilon}\leq g_{\varepsilon}$.
\end{enumerate}
Therefore this method provides uniform approximation of Lipschitz
convex functions by $C^{\infty}$ convex functions, uniformly on
$\R^n$. By Proposition \ref{uniform approximation of Lipschitz
functions is enough} we then have that every (not necessarily
Lipschitz) convex function $f:\R^n\to\R$ can be approximated from
below by $C^\infty$ convex functions, uniformly on bounded sets.
And by Theorem \ref{main theorem} we get that every convex
function $f:\R^n\to\R$ can be approximated from below by
$C^\infty$ convex functions, uniformly on $\R^n$. Moreover, it is
clear that strict (or strong) convexity, local Lipschitzness, and
order are preserved by the combination of these techniques.

The case when $X=U$ is an open convex subset of $\R^n$ can be treated in
a similar way. We consider the open, bounded convex sets
$B_m=\{x\in U
: \textrm{dist}(x, \partial U)>1/m, \|x\| <m\}$, so we have
$\overline{B_m}\subset B_{m+1}$, $\textrm{dist}(B_m, \partial U)>0$ and $U=\bigcup_{m=1}^{\infty}B_m$.
By combining Theorem \ref{main theorem} and Proposition
\ref{uniform approximation of Lipschitz functions is enough}, it
suffices to show that every Lipschitz, convex function $f:U\to\R$
can be approximated by $C^{\infty}$ convex functions, uniformly on
$U$. This can be done as follows: set
$L=\textrm{Lip}(f)$ and consider
    $$
g(x)=\inf\{f(y)+L \|x-y\| \, : \, y\in U\}, \,\,\, x\in\R^n,
    $$
which is a Lipschitz, convex extension of $f$ to all of $\R^n$,
with $\textrm{Lip}(f)=\textrm{Lip}(g)$. By using the above
argument, $g$ can be approximated by $C^\infty$ convex functions,
uniformly on $\R^n$. In particular, $f=g_{|_U}$ can be
approximated by such functions, uniformly on $U$. This proves
Corollary \ref{uniform approximation of convex functions by smooth
convex functions in Rn}.

\section{The Riemannian and the Banach cases}

Let us see how one can deduce Corollaries \ref{uniform
approximation of convex functions on Cartan Hadamard manifolds}
and \ref{uniform approximation of convex functions on Banach
spaces}. As in the case of $\R^n$, the combination of Theorem
\ref{main theorem}, Proposition \ref{uniform approximation of
Lipschitz functions is enough} and Remark \ref{the method
preserves strong convexity etc} reduces the problem to showing
that every {\em Lipschitz} convex function $f:X\to\R$ (where $X$
stands for a Cartan-Hadamard manifold or a Banach space whose dual
is locally uniformly convex) can be approximated by $C^1$ convex
functions, uniformly on $X$. It is well known that this can be
done via the inf convolution of $f$ with squared distances: the
functions
    $$
f_{\lambda}(x)=\inf\{ f(y)+ \frac{1}{2\lambda} d(x,y)^2 \, : \,
y\in X\}
    $$
are $C^1$, convex, Lipschitz (with the same constant as $f$), have
the same minimizers as $f$, are strictly convex whenever $f$ is,
and converge to $f$ as $\lambda\searrow 0$, uniformly on all of
$X$. See \cite{Stromberg} for a survey on the inf convolution
operation in Banach spaces, and \cite{AF2} for the Cartan-Hadamard
case.

\section{Real analytic convex approximations}

Let us finally consider the question about global approximation of convex
functions $f$ by real analytic convex functions on $\R^n$. As mentioned in
the introduction, real analytic approximations of partitions of
unity cannot be employed to glue local approximations into a
uniform approximation of $f$ on all of $\R^n$, unless those local
approximations are strongly convex. Obviously, Theorem \ref{main
theorem} cannot be directly used to this purpose either, because
it only provides $C^{\infty}$ smoothness.

A natural approach to this problem could be trying to approximate
(not necessarily strongly) convex functions by real analytic
strongly convex functions on bounded sets, and then gluing all the
approximating functions by using a real analytic approximation to
a partition of unity. This would lead to cumbersome technical
problems, as the involved functions would no longer have bounded
supports, so one would have to control an infinite sum of products
of functions and their first and second derivatives, and make the
second derivatives of the convex functions prevail. An essentially
equivalent, but more efficient approach would be the following:
first, showing that every convex function can be approximated by
$C^2$ strongly convex functions, and then using Whitney's theorem
on $C^2$-fine approximation of functions by real analytic
functions to conclude. This is what we will try to do.

However, not every convex function
$f:\R^n\to\R$ can be approximated by strongly convex functions
uniformly on $\R^n$. For instance, it is not possible to
approximate a linear function by strongly convex functions. This
impossibility is not related to the fact that a linear function is
not strongly convex, but rather to the fact that the range of its
subdifferential is a singleton: if one considers $f:\R\to\R$
defined by $f(x)=\max\{ax+b, cx+d\}$, with $a\neq c$ (which is
piecewise linear, and in particular not even strictly convex),
then one immediately realizes that $f$ can be uniformly
approximated by $C^\infty$ strongly convex functions. Since every
convex function on $\R$ can be approximated by piecewise
linear convex functions, a combination of this observation with
the gluing technique of Section $2$, allows us to solve the
problem in the way we set out to do, provided that the given
function is not affine. And of course, if it is affine, it already
is real analytic, so there is nothing to show.

It $\R^2$ the situation is more complicated: for instance, the
function $f(x,y)=|x-y|$ is not affine and it cannot be
approximated by strongly convex functions. However, up to a linear
change of coordinates, $f(u,v)=|u|$, so if we construct a real
analytic convex approximation $\theta$ of $t\mapsto |t|$ on $\R$
and we define $g(u,v)=\theta(u)$ then we get a real analytic
convex approximation of $f$. On the other hand, every function of
the form $h(x_1,x_2)=\max\{a_1x_1+b_1, \, c_1x_1+d_1, \,
a_2x_2+b_2, \, c_2x_2+d_2\}$ (with $a_i\neq c_i$) can be uniformly
approximated by strongly convex functions (even though its graph
is a finite union of convex subsets of hyperplanes).

We next elaborate on these ideas to show that, given a convex
function $f:U\subseteq\R^n\to\R$, either we can reduce the problem of
approximating $f$ by real analytic convex functions to some $\R^k$
with $k<n$, or else its graph is supported by a maximum of
finitely many {\em $(n+1)$-dimensional corners} which besides
approximates $f$ on a given bounded set (and which in turn we will
manage to approximate by strongly convex functions).

\begin{definition}[Supporting corners]
{\em We will say that a function $C:\R^n\to\R$ is a
$k$-dimensional {\em corner function} on $\R^n$ if it is of the
form
    $$
C(x)=\max\{\, \ell_1 +b_1, \, \ell_2 +b_2, \, ..., \, \ell_k +b_k
\, \},
    $$
where the $\ell_j:\R^n\to\R$ are linear functions such that the
functions $L_{j}:\R^{n+1}\to\R$ defined by $L_{j}(x,
x_{n+1})=x_{n+1}-\ell_j(x)$, $1\leq j\leq k$, are linearly
independent, and the $b_j\in\R$. We will also say that a convex
function $f:U\subseteq\R^n\to\R$ is supported by $C$ at a point $x\in U$
provided we have $C\leq f$ on $U$ and $C(x)=f(x)$.}
\end{definition}

\begin{lemma}\label{strongly convex approximation of corners}
If $C$ is an $(n+1)$-dimensional corner function on $\R^n$ then
$C$ can be approximated by $C^{\infty}$ strongly convex functions,
uniformly on $\R^n$.
\end{lemma}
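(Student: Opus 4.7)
The plan is to produce the approximation explicitly via the soft-max (log-sum-exp) smoothing:
$$
g_{\varepsilon}(x) \;=\; \varepsilon\,\log\sum_{j=1}^{n+1}\exp\!\Bigl(\tfrac{\ell_j(x)+b_j}{\varepsilon}\Bigr),\qquad \varepsilon>0.
$$
Each $g_\varepsilon$ is visibly $C^{\infty}$, and the elementary bound $\max_j y_j\le\varepsilon\log\sum_j e^{y_j/\varepsilon}\le\max_j y_j+\varepsilon\log(n+1)$ applied with $y_j=\ell_j(x)+b_j$ gives $C\le g_{\varepsilon}\le C+\varepsilon\log(n+1)$ on $\R^n$, so $g_\varepsilon\to C$ uniformly as $\varepsilon\searrow 0$. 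It then suffices to prove that $g_\varepsilon$ is strongly convex, which will also give convexity.

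To this end I would compute $\nabla^2 g_\varepsilon$ directly. Writing $p_j(x)=e^{(\ell_j(x)+b_j)/\varepsilon}/Z(x)$ for the normalized weights (so $p_j>0$ and $\sum_j p_j=1$), letting $\ell_j^{*}\in\R^n$ denote the vector representing the linear form $\ell_j$, and setting $\bar\ell^{*}(x)=\sum_j p_j(x)\ell_j^{*}$, a direct differentiation yields
$$
\nabla^2 g_{\varepsilon}(x)\;=\;\frac{1}{\varepsilon}\sum_{j=1}^{n+1} p_j(x)\,\bigl(\ell_j^{*}-\bar\ell^{*}(x)\bigr)\otimes\bigl(\ell_j^{*}-\bar\ell^{*}(x)\bigr),
$$
a positive multiple of the covariance matrix of the $\R^n$-valued random variable that takes value $\ell_j^{*}$ with probability $p_j(x)$. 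Such a weighted covariance with strictly positive weights is positive definite if and only if the points $\ell_1^{*},\ldots,\ell_{n+1}^{*}$ do not all lie in a common affine hyperplane of $\R^n$, i.e.\ are affinely independent.

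The main point is then to recognize that the hypothesis of the lemma is exactly this affine independence. Indeed, $L_j(x,x_{n+1})=x_{n+1}-\ell_j(x)$ corresponds to the vector $(-\ell_j^{*},1)\in\R^{n+1}$, and $\sum_j c_j(-\ell_j^{*},1)=0$ is equivalent to $\sum_j c_j=0$ together with $\sum_j c_j\ell_j^{*}=0$, so linear independence of $L_1,\ldots,L_{n+1}$ in $(\R^{n+1})^{*}$ is precisely affine independence of $\ell_1^{*},\ldots,\ell_{n+1}^{*}$ in $\R^n$. Hence $\nabla^2 g_\varepsilon(x)$ is positive definite for every $x\in\R^n$, each $g_\varepsilon$ is $C^\infty$ strongly convex, and $\|g_\varepsilon-C\|_\infty\le\varepsilon\log(n+1)$ closes the argument. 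The only real obstacle is the conceptual one of finding a single formula that simultaneously smooths the corner \emph{and} supplies non-degenerate second derivatives exactly under the dimensional hypothesis of the lemma; once the Hessian is rewritten in covariance form, everything reduces to a one-line translation between the two independence conditions.
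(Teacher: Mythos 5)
Your proof is correct, and it takes a genuinely different and more direct route than the paper's. The paper first reduces, by an affine change of variables, to the model corner $\max\{0,x_1,\ldots,x_n\}$, and then proceeds by induction on $n$: it builds a strongly convex ``smooth absolute value'' $\widetilde\theta_\varepsilon=|\cdot|*H_r+\varepsilon/2$ from the heat kernel, uses it to define a variant $\widetilde M_\varepsilon$ of the smooth maximum of Lemma~\ref{smooth maxima}, and glues a strongly convex approximation of $\max\{0,x_1,\ldots,x_k\}$ to one of $\max\{t,0\}$, verifying positivity of the second derivative along each line by a short case analysis on whether the direction has a nonzero last coordinate. You instead write down the single closed formula $g_\varepsilon=\varepsilon\log\sum_j e^{(\ell_j+b_j)/\varepsilon}$, get the uniform bound $C\le g_\varepsilon\le C+\varepsilon\log(n+1)$ for free, and reduce strong convexity to the positive definiteness of the weighted covariance of the gradient vectors $\ell_j^*$; the observation that linear independence of the $L_j=(-\ell_j^*,1)$ in $(\R^{n+1})^*$ is precisely affine independence of the $n+1$ points $\ell_j^*$ in $\R^n$ (hence they affinely span $\R^n$, so no hyperplane contains all of them) closes the argument with no induction and no normal form. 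Your route is shorter and makes the role of the ``$(n+1)$-dimensional'' hypothesis transparent; the paper's route has the mild advantage of reusing the smooth-maximum machinery already set up for the gluing theorem, but yours would work just as well there.
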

\begin{proof}
We will need to use the following variation of the smooth maximum of Lemma \ref{smooth maxima}: given $\varepsilon, r>0$,
let $\beta_{\varepsilon, r}=|\cdot|*H_{r}+\varepsilon/2$, where $H_{r}(x)=\frac{1}{(4\pi r)^{1/2}}\exp(-x^{2}/4 r)$. We have $\beta_{\varepsilon, r}''(t)=2e^{-t^2/4 r}/(4 r\pi)^{1/2}>0$, so $\beta_{\varepsilon, r}$ is strongly convex and $1$-Lipschitz,  and as $r\to 0$ we have $\beta_{\varepsilon, r}(t)\to |t|+\varepsilon/2$ uniformly on $t\in\R$, so we may find $r=r(\varepsilon)>0$ such that $|t|\leq \beta_{\varepsilon, r}(t)\leq |t|+ \varepsilon$ for all $t$. Put $\widetilde{\theta}_{\varepsilon}(t)=\beta_{\varepsilon, r(\varepsilon)}(t)$, and define $\widetilde{M}_{\varepsilon}:\R^2\to\R$ by
$$
\widetilde{M}_{\varepsilon}(x,y)=\frac{x+y+\widetilde{\theta}_{\varepsilon}(x-y)}{2}.
$$
It is clear that $\widetilde{M}_{\varepsilon}$ satisfies all the properties of Lemma \ref{smooth maxima} except for $(3)$.

Now let us prove our lemma. Up to an affine change of variables in $\R^{n+1}$, the problem is
equivalent to showing that the function
    $$
f(x)=\max\{ 0, x_1, x_2, ..., x_{n}\}
    $$
can be uniformly approximated on $\R^n$ by $C^{\infty}$ strongly
convex functions. We will show that this is possible by induction
on $n$.

For $n=1$, the function $f(x)=\max\{x,0\}$ is Lipschitz,
so the convolutions $f_{\varepsilon}=f*H_{\varepsilon}$  are $C^\infty$, Lipschitz and converge
to $f$, uniformly on $\R$, as $\varepsilon\searrow 0$. Besides, as
one can easily compute,
    $$
f''_{\varepsilon}(x)=\frac{1}{(4\pi\varepsilon)^{1/2}}
e^{-\frac{x^2}{4\varepsilon}}>0,
    $$
so the $f_{\varepsilon}$ are strongly convex.

Now, suppose that the function $f(x_{1}, ..., x_{k})=\max\{0,
x_1,...,x_k\}$ can be uniformly approximated by $C^\infty$ smooth
strongly convex functions on $\R^k$. Then, for a given
$\varepsilon>0$ we can find $C^\infty$ strongly convex functions
$g:\R^k\to\R$ and $\alpha:\R\to\R$ such that
    $$
f(x)\leq g(x)\leq f(x)+\varepsilon \,\,\, \textrm{ for all }
\,\,\, x\in\R^k, \,\,\, \textrm{ and } \,\,\,
    $$
    $$
\max\{t,0\}\leq\alpha(t)\leq\max\{t,0\}+\varepsilon \,\,\,
\textrm{for all }\,\,\, t\in\R.
    $$
Given the function $$F(x_1..., x_k, x_{k+1})=\max\{0, x_1, ...,
x_{k+1}\}=\max\{x_{k+1}, \, f(x_1, ..., x_k)\},$$ let us define
$G:\R^{k+1}\to\R$ by
    $$
G(x_1, ..., x_{k+1})=\widetilde{M}_{\varepsilon}\left( g(x_1, ..., x_k),
\alpha(x_{k+1})\right).
    $$
We have $G\in
C^{\infty}(\R^{k+1})$, and $F(x)\leq G(x)\leq F(x)+2\varepsilon$
for all $x\in \R^{k+1}$, so in order to conclude the proof we only
have to see that $G$ is strongly convex. Given $x, v\in\R^{k+1}$ with $v\neq 0$,
it is enough to check that the function
    $$
h(t):=G(x+tv)=\widetilde{M}_{\varepsilon}(\beta(t), \gamma(t)),
    $$
where $\beta(t)=g(x_1 +tv_1, ..., x_k+tv_k)$ and
$\gamma(t)=\alpha(x_{k+1} + tv_{k+1})$, satisfies $h''(t)>0$.
If $v_{k+1}\neq 0$ and $(v_{1}, ..., v_{k})\neq 0$ then,
since $g$ is strongly convex on $\R^k$ and $\alpha$ is strongly
convex on $\R$, we have $\beta''(t)>0$ and $\gamma''(t)>0$, so exactly as in the proof of $(9)$
of Proposition \ref{properties of M(f,g)} we also get
$h''(t)>0$. On the other hand, if for instance we have $v_{k+1}=0$ then $\beta''(t)>0$ and $\gamma'(t)=\gamma''(t)=0$, so
\begin{eqnarray*}
& &\frac{d^2}{dt^2}\widetilde{M}_{\varepsilon}(\beta(t), \gamma(t))=\\
& &\frac{ \left(1+\theta_{\varepsilon}'(\beta(t)-\gamma(t))\right) \beta''(t) + \theta_{\varepsilon}''(\beta(t)-\gamma(t)) \left( \beta(t)-\gamma(t)\right)^{2}}{2}>0,
\end{eqnarray*}
because $|\widetilde{\theta}_{\varepsilon}'|\leq 1$, $\widetilde{\theta}_{\varepsilon}''>0$, and $\widetilde{\theta}_{\varepsilon}'(0)=0$. Similarly one checks that $\frac{d^2}{dt^2}\widetilde{M}_{\varepsilon}(\beta(t), \gamma(t))>0$ in the case when $(v_1,...,v_k)=0\neq v_{k+1}$.
\end{proof}

\begin{lemma}\label{reduction to Rk with k less than n}
Let $U\subseteq\R^n$ be open and convex, $f:U\to\R$ be a $C^p$ convex function, and $x_0\in U$.
Assume that $f$ is not supported at $x_0$ by any
$(n+1)$-dimensional corner function. Then there exist $k<n$, a
linear projection $P:\R^n\to\R^k$, a $C^p$ convex function
$c:P(U)\subseteq\R^k\to\R$, and a linear function $\ell:\R^n\to\R$ such that
$f=c\circ P+\ell$.
\end{lemma}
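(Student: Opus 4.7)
The plan is to translate the geometric hypothesis into a purely algebraic constraint on the set
\[
A := \{\,\zeta\in\R^n : \exists\, b\in\R,\ \langle\zeta,x\rangle+b\leq f(x)\text{ for all } x\in U\,\}
\]
of slopes of affine minorants of $f$, and then extract the factorization from the representation of $f$ as the supremum of its affine minorants.

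The first step is to show that the hypothesis is equivalent to $\dim\mathrm{aff}(A)<n$. An $(n+1)$-dimensional corner supporting $f$ at $x_0$ is precisely a collection of $n+1$ affine minorants $\langle\zeta_j,\cdot\rangle+b_j$, at least one of which is tight at $x_0$ (so its slope lies in $\partial f(x_0)\subseteq A$), whose slopes $\zeta_j$ are affinely independent in $\R^n$; this affine independence is equivalent to the stated linear independence of the lifts $L_j$, as one sees by subtracting row $1$ from the other rows of the augmented matrix. Since $f$ is continuous convex on the open set $U$, $\partial f(x_0)\neq\emptyset$. If $\mathrm{aff}(A)=\R^n$, I can take any $\zeta_0\in\partial f(x_0)$ and complete it to an affinely independent $(n+1)$-tuple inside $A$ (because $A-\zeta_0$ contains $0$ and has affine hull $\R^n$, hence linear span $\R^n$), producing the desired corner. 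Conversely, any such corner's slopes lie in $A$, so affine independence is impossible when $\mathrm{aff}(A)\subsetneq\R^n$.

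Under the hypothesis, I fix $\zeta_0\in A$ and a linear subspace $V$ of dimension $k<n$ with $A\subseteq\zeta_0+V$, and set $W=V^{\perp}$, so $\dim W\geq 1$. The analytic heart of the argument is the identity
\[
f(x+w)=f(x)+\langle\zeta_0,w\rangle\qquad\text{for all } x\in U,\ w\in W,\ x+w\in U.
\]
To see this, observe that every $\zeta\in A$ has the form $\zeta_0+\eta$ with $\eta\in V$, so $\langle\zeta,w\rangle=\langle\zeta_0,w\rangle$ for $w\in W$; hence every affine minorant shifts by exactly $\langle\zeta_0,w\rangle$ under the translation $x\mapsto x+w$. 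Using the classical fact that $f$ equals the pointwise supremum of its affine minorants (any subgradient at a point supplies a tight minorant), taking suprema at $x$ and at $x+w$ yields $f(x+w)\geq f(x)+\langle\zeta_0,w\rangle$, and the same argument with $-w$ gives the reverse inequality.

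With the $W$-invariance established, the rest is bookkeeping. Set $\ell(x):=\langle\zeta_0,x\rangle$ and $h:=f-\ell$; then $h$ is convex, $C^p$, and invariant under $W$-translations inside $U$. Since $U$ is convex, any two points of $U$ with the same image under the linear projection $P:\R^n\to\R^n/W\cong\R^k$ are joined in $U$ by a segment parallel to $W$, along which $h$ is constant; hence $h$ descends to a well-defined function $c:P(U)\to\R$. Convexity of $c$ is inherited from $h$ by taking convex combinations of preimages and using convexity of $U$; $C^p$-smoothness is local, obtained by writing $c(v)=h(v,w_0)$ for any fixed $w_0$ with $(v_0,w_0)\in U$, which is valid on a neighborhood of $v_0$. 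The identity $f=c\circ P+\ell$ follows. The main obstacle in this plan is conceptual rather than computational: one has to recognize that the corner-support hypothesis, although stated at a single point $x_0$, actually constrains the global object $A$, and therefore forces a global factorization of $f$.
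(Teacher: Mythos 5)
Your proof is correct, and it takes a genuinely different route from the paper's. The paper works \emph{locally at} $x_0$: it takes a corner of maximal dimension $m$ supporting $f$ at $x_0$, sets $k=m-1$, extracts the $(n-k)$-dimensional common kernel $\bigcap_{j\geq 2}\ker(\ell_j-\ell_1)$, and then proves the directional derivatives of $f-\ell_1$ vanish along that kernel by contradiction (a nonzero derivative at $y+t_0w_q$ would produce a $(k+2)$-dimensional corner still supporting $f$ at $x_0$, contradicting maximality of $m$). You instead pass directly to the \emph{global dual} object $A=\mathrm{dom}\,f^*$, observe that the $(n+1)$-corner-support condition at $x_0$ is equivalent to $\dim\mathrm{aff}(A)=n$ (using that $\partial f(x_0)\subseteq A$ is nonempty, so any affinely independent spanning family in $A$ can be anchored at a subgradient), and then get the $W$-translation invariance of $f-\langle\zeta_0,\cdot\rangle$ as an immediate consequence of the biconjugate/supremum-of-affine-minorants representation. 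What the paper's argument buys is a completely self-contained, elementary contradiction that fits naturally alongside the induction on $m$ used elsewhere in the section; what yours buys is that the mechanism behind the lemma becomes transparent — the single-point hypothesis constrains $A$ globally, which is exactly the ``conceptual obstacle'' you flag — and the translation invariance drops out without any differentiation or case analysis. Your factorisation bookkeeping (descent to $c:P(U)\to\R$, convexity, local $C^p$-smoothness via a local section of $P$) is standard and correctly handled. Both arguments are equally general (the paper's proof only uses $f'$ existing, which holds under the $C^p$ hypothesis, and your argument doesn't even need differentiability, which is consistent with the paper's later remark that the lemma extends to nonsmooth $f$ via subdifferentials).
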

\begin{proof}
If $f$ is affine the result is obvious. If $f$ is not affine then
there exists $y_0\in U$ with $f'(x_0)\neq f'(y_0)$. It is clear
that $L_1(x, x_{n+1})=x_{n+1}-f'(x_0)(x)$ and $L_2(x,
x_{n+1})=x_{n+1}-f'(y_0)(x)$ are two linearly independent linear
functions on $\R^{n+1}$, hence $f$ is supported at $x_0$ by the
two-dimensional corner $x\mapsto \max\{f(x_0)+f'(x_0)(x-x_0),
f(y_0)+f'(y_0)(x-y_0)\}$. Let us define $m$ as the greatest
integer number so that $f$ is supported at $x_0$ by an
$m$-dimensional corner. By assumption we have $2\leq m<n+1$.
Define $k=m-1$. There exist $\ell_1, ..., \ell_{k+1}\in
(\R^n)^{*}$ with $L_{j}(x, x_{n+1})=x_{n+1}-\ell_j(x)$, $j=1, ...,
k+1$, linearly independent in $(\R^{n+1})^{*}$, and $b_{1}, ...,
b_{k+1}\in\R$, so that $C=\max_{1\leq j\leq k+1}\{\ell_j +b_j\}$
supports $f$ at $x_0$.

Observe that the $\{L_{j}-L_1\}_{j=2}^{k+1}$ are linearly
independent in $(\R^{n+1})^{*}$, hence so are the
$\{\ell_{j}-\ell_1\}_{j=2}^{k+1}$ in $(\R^n)^{*}$, and therefore
$\bigcap_{j=2}^{k+1}\textrm{Ker}\, (\ell_{j}-\ell_1)$ has
dimension $n-k$. Then we can find linearly independent vectors
$w_1, ..., w_{n-k}$ such that $\bigcap_{j=2}^{k+1}\textrm{Ker}\,
(\ell_{j}-\ell_1)=\textrm{span}\{w_1, ..., w_{n-k}\}$.

Now, given any $y\in U$, if $\frac{d}{dt} (f-\ell_1)(y+t
w_q)|_{t=t_{0}}\neq 0$ for some $t_0$ then
$f'(y+t_0 w_q)-\ell_1$ is linearly independent with
$\{\ell_{j}-\ell_1\}_{j=2}^{k+1}$, which implies that $(x,
x_{n+1})\mapsto x_{n+1}-f'(y+t_0w_q)$ is linearly independent with
$L_1, ..., L_{k+1}$, and therefore the function
$$x\mapsto \max\{\ell_1(x)+ b_1, ..., \ell_{k+1}(x)+b_{k+1},
f'(y+t_0 w_q)(x-y-t_0 w_q)+f(y+t_0 w_q)\}$$ is a $(k+2)$-dimensional corner supporting $f$ at $x_0$, which
contradicts the choice of $m$. Therefore we must have
    $$
\frac{d}{dt}(f-\ell_1)(y+tw_q)=0 \,\,\,
\textrm{ for all } \, y\in U, t\in\R \, \textrm{ with } \, y+tw_q\in U,  \, q=1, ..., n-k.
    $$
This implies that
$$
(f-\ell_1)(y+\sum_{j=1}^{n-k}t_{j}w_j)=(f-\ell_1)(y)
$$ if $y\in U$ and $y+\sum_{j=1}^{n-k}t_{j}w_j\in U$.
Let $Q$ be the orthogonal projection of $\R^n$ onto the subspace
$E:=\textrm{span}\{w_1, ..., w_{n-k}\}^{\bot}$. For each $z\in Q(U)$ we may define
$$
\widetilde{c}(z)=(f-\ell_1)(z+\sum_{j=1}^{n-k}t_j w_j)
$$
if $z+\sum_{j=1}^{n-k}t_j w_j \in U$ for some $t_1, ..., t_{n-k}$. It is clear that $\widetilde{c}:Q(U)\to\R$ is well defined,
convex and $C^p$, and satisfies
$$
f-\ell_1=\widetilde{c}\circ Q.
$$
Then, by taking a linear
isomorphism $T:E\to\R^k$ and setting $P=TQ$, we have that
$f=c\circ P+\ell_1$, where $c=\widetilde{c}\circ T^{-1}$ is defined on $P(U)$.
\end{proof}

Now we can prove Theorem \ref{uniform approximation of convex by
real analytic convex}. We already know that a convex function
$f:U\subseteq\R\to\R$ can be uniformly approximated from below by
$C^1$ functions, so we may assume that $f\in C^{1}(U)$.
We will proceed by induction on $n$, the dimension of $\R^n$.

For $n=1$ the result can be proved as follows. Either
$f:U\to\R$ is affine (in which case we are done) or $f$
can be supported by a $2$-dimensional corner at every point $x\in
U$. In the latter case, let us consider a compact interval
$I\subset U$. Given $\varepsilon>0$, since $f$ is convex and
Lipschitz on $I$ we can find finitely many affine functions $h_1,
..., h_m:\R\to\R$ such that each $h_j$ supports $f-\varepsilon$ at
some point $x_j\in I$ and $f-2\varepsilon\leq \max\{h_1, ...,
h_m\}$ on $I$. By convexity we also have $\max\{h_1, ..., h_{m}\}\leq f-\varepsilon$
on all of $U$. For each $x_j$ we may find a $2$-dimensional corner
$C_j$ which supports $f-\varepsilon$ at $x_j$. Since $f$ is
differentiable and convex we have $h_j=C_j$ on a neighborhood of
$x_j$ and, by convexity, also $h_j\leq C_j\leq f-\varepsilon$ and
$\max\{C_1, ..., C_m\}\leq f-\varepsilon$ on $U$. And we also have
$f-2\varepsilon\leq \max\{h_1,..., h_m\}\leq\max\{C_1, ...,
C_{m}\}\leq f-\varepsilon$ on $I$. Now apply Lemma \ref{strongly
convex approximation of corners} to find $C^{\infty}$ strongly
convex functions $g_{1}, ..., g_{m}:\R\to\R$ such that $C_j\leq
g_j\leq C_j +\varepsilon'$, where $\varepsilon':=\varepsilon/2m$,
and define $g:\R\to\R$ by
$$g=M_{\varepsilon'}(g_1, M_{\varepsilon'}(g_2,
M_{\varepsilon'}(g_3, ..., M_{\varepsilon'}(g_{m-1}, g_m))...))$$
(for instance, if $m=3$ then $g=M_{\varepsilon'}(g_1,
M_{\varepsilon'}(g_2, g_3))$ ). By Proposition \ref{properties of
M(f,g)} we have that $g\in C^{\infty}(\R)$ is strongly convex,
    $$
\max\{C_1, ..., C_m\}\leq g\leq \max\{C_1, ...,
C_m\}+m\varepsilon'\leq f-\frac{\varepsilon}{2} \,\,\, \textrm{ on
} \,\,\, U,
    $$
and
    $$
f-2\varepsilon\leq \max\{C_1, ..., C_m\}\leq g \,\,\, \textrm{ on
} \,\,\, I.
    $$
Therefore $f:U\subseteq\R\to\R$ can be approximated from below by $C^\infty$
strongly convex functions, uniformly on compact subintervals of
$U$. By Theorem \ref{main theorem} and Remark \ref{the method
preserves strong convexity etc} we conclude that, given
$\varepsilon>0$ we may find a $C^\infty$ strongly convex function
$h$ such that $f-2\varepsilon\leq h\leq f-\varepsilon$ on $U$.

Finally, set $\eta(x)=\frac{1}{2}\min\{h''(x), \varepsilon\}$ for
every $x\in U$. The function $\eta:U\to (0, \infty)$ is
continuous, so we can apply Whitney's theorem on $C^2$-fine
approximation of $C^2$ functions by real analytic functions to find a real analytic function $g:U\to\R$ such
that
    $$
\max\{|h-g|, |h'-g'|, |h''-g''|\}\leq \eta.
    $$
This implies that $f-3\varepsilon\leq g\leq f$ and
$g''\geq\frac{1}{2}h''>0$, so $g$ is strongly convex as well.

\medskip

Now assume the result is true in $\R, \R^2, ..., \R^{d}$, and let
us see that then it is also true in $\R^{d+1}$. If there is some
$x_0\in U$ such that $f:U\subseteq\R^{d+1}\to\R$ is not supported
at $x_0$ by any $(d+2)$-dimensional corner function then,
according to Lemma \ref{reduction to Rk with k less than n}, we
can find $k\leq d$, a linear projection $P:\R^{d+1}\to\R^k$, a
linear function $\ell:\R^{d+1}\to\R$, and a $C^\infty$ convex
function $c:P(U)\to\R$ such that $f=c\circ P+\ell$. By assumption
there exists a real analytic convex function $h:P(U)\subseteq\R^k\to\R$ so that
$c-\varepsilon\leq h\leq c$. Then the function $g=h\circ P+\ell$
is real analytic, convex (though never strongly convex), and
satisfies $f-\varepsilon\leq g\leq f$.

If there is no such $x_0$ then one can repeat exactly the same
argument as in the case $n=1$, just replacing $2$-dimensional
corners with $(d+2)$-dimensional corners, the interval $I$ with a compact
convex body $K\subset U$, and $\eta$ with
    $$
\eta(x)=\frac{1}{2}\min\{ \varepsilon, \, \min\{D^{2}h(x)(v)^2 \,
: v\in \R^{d+1}, \|v\|=1\}\},
    $$
in order to conclude that there exists a real analytic strongly
convex $g:U\to\R$ such that $f-\varepsilon\leq g\leq f$ on $U$.
\,\,\, $\Box$

Incidentally, the above argument also shows Proposition
\ref{characterization of functions that cannot be approximated by
strongly convex functions} in the case when $f$ is $C^1$. In the
general case of a nonsmooth convex function one just needs to take
two more facts into account. First, Lemma \ref{reduction to Rk
with k less than n} holds for nonsmooth convex functions (to see
this, use the fact that if the range of the subdifferential of a
convex function is contained in $\{0\}$ then the function is
constant, see for instance \cite[Chapter 1, Corollary
2.7]{Clarke}, and apply this to the function $(t_1,...,
t_{n-k})\mapsto (f-\ell_1)(y+\sum_{j=1}^{n-k}t_j w_j)$). Second, in the
above proof one can use Rademacher's theorem and uniform
continuity of $f$ to see that the $x_j$ can be assumed to be
points of differentiability of $f$.

\section{Three counterexamples}

In this section we briefly discuss the possibility of approximating a convex function $f:\R^n\to\R$ by smooth convex functions in the $C^{0}$-fine topology. We will see that there is quite a big difference between the cases $n=1$ and $n\geq 2$.

In the case $n=1$ it can be shown that every convex function $f:\R\to\R$ can be approximated by convex real analytic functions in this topology.\footnote{We will provide a proof of this statement in a forthcoming paper.} That is, for every continuous function $\varepsilon:\R\to (0, \infty)$ there exists a real analytic convex function $g:\R\to\R$ such that $|f(x)-g(x)|\leq \varepsilon(x)$ for all $x\in\R$. However, this approximation cannot be performed from below:
\begin{example}
Let $f:\R\to\R$ be defined by $f(x)=|x|$. For every $C^1$ convex function $g:\R\to\R$ such that $g(0)\leq 0$ we have $$\liminf_{|x|\to\infty}|f(x)-g(x)|>0.$$
\end{example}
\noindent In particular, if $\varepsilon:\R\to (0, \infty)$ is continuous and satisfies $\lim_{|x|\to\infty}\varepsilon(x)=0$ then there is no $C^1$ convex function $g:\R\to\R$ such that $|x|-\varepsilon(x)\leq g(x)\leq |x|$.

In two or more dimensions the situation gets much worse: $C^{0}$-fine approximation of convex functions by $C^1$ convex functions is no longer possible in general.

\begin{example}
For $n\geq 2$, let $f:\R^{n}\to\R$ be defined by $f(x_1, ..., x_n)=|x_1|$, and let $\varepsilon:\R^n\to (0, \infty)$ be continuous with $\lim_{|x|\to\infty}\varepsilon(x)=0$. Then there is no $C^1$ convex function $g:\R^n\to\R$ such that $|f-g|\leq\varepsilon$.
\end{example}
\begin{proof}
Suppose that such a function $g$ exists. Since $|g(x_1,0, ..., 0)- |x_1||\leq \varepsilon(x_1,0, ..., 0)$ and $\lim_{|x_1|\to\infty}\varepsilon(x_1,0, ..., 0)=0$, we know from the preceding example that $g(0)>0$. Consider the function $h:\R^{n-1}\to\R$ defined by $h(y)=g(0,y_1, ..., y_{n-1})$. We have $\lim_{|y|\to\infty}h(y)=0$ and $h(0)>0$, but this contradicts the fact that $h$ is convex.
\end{proof}

Our last example shows that for $n\geq 2$ there also exist $C^{\infty}$ convex functions on $\R^n$ which cannot be approximated by real analytic convex functions in the $C^{0}$-fine topology.

\begin{example}
Let $n\geq 2$ and $f:\R^n\to\R$ be defined by $f(x_1,..., x_n)=\alpha(x_1)$, where $\alpha:\R\to [0, \infty)$ is a $C^{\infty}$ convex function such that $\alpha(t)=0$ for $|t|\leq 1$, and $\alpha(t)=|t|-2$ for $|t|\geq 3$. Let $\varepsilon:\R^n\to (0, \infty)$ be continuous and such that $\lim_{|x|\to\infty}\varepsilon(x)=0$. Then there is no real analytic convex function $g:\R^n\to\R$ such that $|f-g|\leq\varepsilon$.
\end{example}
\begin{proof}
Suppose there exists such a function $g$. As in the preceding example, if $g(t,0)>\alpha(t)$ for some $t$ we easily get a contradiction. Therefore $t\mapsto g(t,0)$ is everywhere below $\alpha$. By convexity, if $\alpha(t_0)-g(t_0,0):=s_0>0$ for some $t_0\geq 3$, then $\alpha(t)-g(t,0)\geq s_0>0$ for all $t\geq t_0$, and therefore $g$ cannot approximate $f$ as required. Hence $g(t,0)=t-2$ for all $t\geq 3$, and since $g$ is real analytic, also $g(t,0)=t-2$ for all $t\in\R$. But then $\lim_{t\to-\infty}|g(t,0)-f(t,0)|=\infty$, and again $g$ cannot approximate $f$.
\end{proof}

\section{Appendix: convex functions vs convex bodies}

In this appendix we recall a (somewhat unbalanced) basic relationship between convex functions and convex bodies, regarding approximation. Given a convex function $f:\R^n\to\R$, if we consider the epigraph $C$ of $f$, which is an unbounded convex body in $\R^{n+1}$, we can approximate $C$ by smooth convex bodies $D_k$ such that $\lim_{k\to\infty} D_k=C$ in the Hausdorff distance. Then it is easy to see (via the implicit function theorem) that the boundaries $\partial D_k$ are graphs of smooth convex functions $g_k:\R^n\to\R$ such that $\lim_{k\to\infty} g_k=f$ uniformly on compact subsets of $\R^n$. But when $f$ is not Lipschitz this convergence is not uniform on $\R^n$, as the following example shows.

\begin{example}
Consider the function $f:\R\to\R$, $f(x)=x^2$. The epigraph $C:=\{(x,y): y\geq x^2\}$ is an unbounded convex body, and the set $D:=\{(x,y) : \textrm{dist} \left( (x,y), C\right)\leq \varepsilon/2\}$ is a $C^1$ convex body such that $C\subset D\subset C+\varepsilon B$, where $B$ is the unit ball of $\R^2$. Hence $D$ approximates $C$ in the Hausdorff distance, and the boundary $\partial D$ is indeed the graph of a $C^1$ convex function $g:\R\to\R$. But the function $g$ does not approximate $f$ on $\R$, because $\lim_{|x|\to\infty} |f(x)-g(x)|=\infty$.
\end{example}

Therefore one cannot employ results on approximation of (unbounded) convex bodies to deduce results on global approximation of convex functions.
By contrast, one can use the well known results on global approximation of Lipschitz convex functions by real analytic convex functions to deduce
the following.
\begin{corollary}[Minkowski]
Let $C\subset\R^n$ be a (not necessarily bounded) convex body. For every $\varepsilon>0$ there exists a real analytic convex body $D$ such that
$$
C\subset D\subset C+\varepsilon B,
$$
where $B$ is the unit ball of $\R^n$.
\end{corollary}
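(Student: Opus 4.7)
The plan is to realize $D$ as a sublevel set of a real analytic convex function that approximates the distance function to $C$, then invoke Theorem~\ref{uniform approximation of convex by real analytic convex} together with a simple regularity fact about sublevel sets of convex functions.

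Assuming $C\subsetneq\R^n$ (otherwise $D=\R^n$ works), I would set $f(x)=\textrm{dist}(x,C)$. Since $C$ is closed and convex, $f$ is $1$-Lipschitz and convex on $\R^n$ with $\{f=0\}=C$. Moreover $f$ is unbounded: for any $y\notin C$ a separating hyperplane provides a direction along which one can move and push $\textrm{dist}(\cdot,C)$ to infinity. Applying Theorem~\ref{uniform approximation of convex by real analytic convex} to $f$ on $U=\R^n$ with tolerance $\varepsilon/4$ yields a real analytic convex function $g:\R^n\to\R$ with
$$
f-\tfrac{\varepsilon}{4}\leq g\leq f.
$$

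I then propose to define
$$
D:=\{x\in\R^n : g(x)\leq \tfrac{\varepsilon}{4}\}.
$$
This set is closed and convex as a sublevel set of a continuous convex function; it contains $C$ because $g\leq f=0$ on $C$, and therefore has nonempty interior; and since $g\geq f-\varepsilon/4$ is unbounded above, $D\neq\R^n$. For the Hausdorff sandwich, any $x\in D$ satisfies
$$
\textrm{dist}(x,C)=f(x)\leq g(x)+\tfrac{\varepsilon}{4}\leq \tfrac{\varepsilon}{2}<\varepsilon,
$$
so $D\subset C+\varepsilon B$, giving the desired $C\subset D\subset C+\varepsilon B$.

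The step I expected to be the main obstacle is checking that $\partial D=\{g=\varepsilon/4\}$ is a real analytic hypersurface, but it turns out to be automatic. For a differentiable convex function, $\nabla g(x)=0$ if and only if $x$ is a global minimizer of $g$. Since $g\leq 0$ on $C$, one has $\inf g\leq 0<\varepsilon/4$, so no point of $\partial D$ is a minimizer, and hence $\nabla g\neq 0$ everywhere on $\partial D$. The real analytic implicit function theorem then exhibits $\partial D$ as a real analytic hypersurface, making $D$ a real analytic convex body, as required.
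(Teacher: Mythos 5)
Your proof is correct and takes essentially the same route as the paper: define $D$ as a sublevel set of a real analytic convex approximation to $\operatorname{dist}(\cdot,C)$, and observe that the gradient of a differentiable convex function vanishes only at global minimizers, which cannot lie on the chosen level set, so the real analytic implicit function theorem applies. The only genuine difference is which approximation result you invoke. Since $f(x)=\operatorname{dist}(x,C)$ is globally $1$-Lipschitz, the paper does not need the full strength of Theorem~\ref{uniform approximation of convex by real analytic convex}: it simply convolves $f$ with the heat kernel, which already gives a real analytic, convex, $1$-Lipschitz approximation uniformly on all of $\R^n$, and then takes $g$ shifted so that $D=g^{-1}(-\infty,0]$. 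You instead cite the main theorem with tolerance $\varepsilon/4$ and use the level $\varepsilon/4$; this is a larger hammer but logically fine, since the corollary is stated after that theorem. The paper's phrasing (``one can use the well known results on global approximation of Lipschitz convex functions'') is meant to emphasize that the corollary is independent of the heavy machinery of Section~6, a point your version somewhat obscures but does not falsify.
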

\begin{proof}
Consider the $1$-Lipschitz, convex function $f:\R^n\to [0, \infty)$ defined by $f(x)=\textrm{dist}(x,C)$. Using integral convolution with the heat kernel one can produce a real analytic convex (and $1$-Lipschitz) function $g:\R^n\to\R$ such that $f-2\varepsilon/3\leq g\leq f-\varepsilon/3$ on $\R^n$. Define $D=g^{-1}(-\infty, 0]$. Since $g$ is convex and does not have any minimum on $\partial D=g^{-1}(0)$, we have $\nabla g(x)\neq 0$ for all $x\in \partial D$, hence $\partial D$ is a $1$-codimensional real analytic submanifold of $\R^n$. Because $f\geq g$, we have $C\subset D$. And if $x\notin C+\varepsilon B$ then $f(x)\geq \varepsilon$, hence $g(x)-\varepsilon/3\geq f(x)-\varepsilon\geq 0$, which implies $g(x)>0$, that is $x\notin D$.
\end{proof}

\bigskip


\end{document}